\date{\today}
\let\oldsection\section
\renewcommand\section{\setcounter{equation}{0}\oldsection}
\newtheorem{corollary}{Corollary}[section]
\newtheorem{theorem}{Theorem}[section]
\newtheorem{proposition}{Proposition}[section]
\newtheorem{remark}{Remark}[section]
\begin{document}

\title[Global strong solutions 1D compressible Navier-Stokes]{Global well-posedness of the 1D compressible Navier-Stokes equations with constant heat conductivity and nonnegative density}

\author{Jinkai~Li}
\address[Jinkai~Li]{Department of Mathematics, The Chinese University of Hong Kong, Hong Kong, China}
\email{jklimath@gmail.com}


\keywords{Compressible Navier-Stokes equations; heat conductivity; vacuum.}
\subjclass[2010]{35A01,
35B45, 76N10, 76N17.}


\begin{abstract}
In this paper we consider the initial-boundary value problem to
the one-dimensional compressible Navier-Stokes
equations for idea gases. Both the viscous and heat conductive
coefficients are assumed to be positive constants, and the initial
density is allowed to have vacuum. Global existence and uniqueness of
strong solutions is established for any $H^2$ initial data, which
generalizes the well-known result of Kazhikhov--Shelukhin
(Kazhikhov, A.~V.; Shelukhin, V.~V.: \emph{Unique global solution with
respect to time of initial boundary value problems for one-dimensional
equations of a viscous gas}, J.\,Appl.\,Math.\,Mech., \bf41 \rm(1977),
273--282.) to the case that with nonnegative initial density.
An observation to overcome the difficulty caused by the lack of the positive lower bound of the density is that the ratio of the
density to its initial value is inversely proportional to the time integral of the upper bound of the temperature, along the trajectory. 
\end{abstract}

\maketitle

\allowdisplaybreaks

\section{Introduction}
\subsection{The compressible Navier-Stokes equations}
In this paper, we consider the following one-dimensional heat conductive
compressible Navier-Stokes equations:
\begin{eqnarray}
  \partial_t\rho+\partial_x(\rho u)&=&0,\label{1.1}\\
  \rho(\partial_tu+u\partial_xu)-\mu\partial_x^2u+\partial_xp&=&0,
  \label{1.2}\\
  c_v\rho(\partial_t\theta+u\partial_x\theta)+\partial_xup
  -\kappa\partial_x^2\theta&=&\mu(\partial_xu)^2, \label{1.3}
\end{eqnarray}
where $\rho, u, \theta$, and $p$, respectively, denote the density,
velocity, absolute temperature, and pressure. The viscous coefficient
$\mu$ and heat conductive coefficient $\kappa$ are assumed to be
positive constants. The state equation for the ideal gas reads as
$$
p=R\rho\theta,
$$
where $R$ is a positive constant.

The compressible Navier-Stokes equations have been extensively studied.
In the absence of vacuum, i.e., the case that the initial density
is uniformly bounded away from zero, global well-posedness
of strong solutions to the one dimensional compressible
Navier-Stokes equations has been
well-known since the pioneer works by Kazhikhov--Shelukhin
\cite{KAZHIKOV77} and Kazhikhov \cite{KAZHIKOV82}.
Inspired by these works, global existence and uniqueness of weak
solutions were later established by Zlotnik--Amosov
\cite{ZLOAMO97,ZLOAMO98} and  Chen--Hoff--Trivisa \cite{CHEHOFTRI00} for
the initial boundary
value problems, and by Jiang--Zlotnik \cite{JIAZLO04} for the Cauchy
problem. Large time behavior of solutions to the one dimensional
compressible Navier-Stokes equations with large initial data was
recently proved by Li--Liang \cite{LILIANG16}. The corresponding global
well-posedness results for the
multi-dimensional case were established only for
small perturbed initial data around some non-vacuum equilibrium or for
spherically symmetric large initial data, see, e.g., Matsumura--Nishida
\cite{MATNIS80,MATNIS81,MATNIS82,MATNIS83}, Ponce \cite{PONCE85},
Valli--Zajaczkowski \cite{VALZAJ86}, Deckelnick \cite{DECK92}, Jiang
\cite{JIANG96}, Hoff \cite{HOFF97}, Kobayashi--Shibata \cite{KOBSHI99},
Danchin \cite{DANCHI01}, Chikami--Danchin \cite{CHIDAN15}, and the
references therein.

In the presence of vacuum, that is the density may vanish
on some set or tends to zero at the far field, global existence of
weak solutions to the isentropic compressible Navier-Stokes equations
was first proved by Lions \cite{LIONS93,LIONS98},
with adiabatic constant $\gamma\geq\frac95$, and later generalized by
Feireisl--Novotn\'y--Petzeltov\'a \cite{FEIREISL01} to $\gamma>\frac32$,
and further by Jiang--Zhang \cite{JIAZHA03} to $\gamma>1$
for the axisymmetric solutions. For the full compressible Navier-Stokes
equations, global existence of the variational weak solutions was proved
by Feireisl \cite{FEIREISL04P,FEIREISL04B}, which however is not
applicable for the ideal gases. Local well-posedness of strong solutions
to the full compressible
Navier-Stokes equations, in the presence of vacuum, was proved by
Cho--Kim \cite{CHOKIM06-2}, see also Salvi--Stra$\check{\text s}$kraba
\cite{SALSTR93}, Cho--Choe--Kim \cite{CHOKIM04}, and Cho--Kim
\cite{CHOKIM06-1} for the isentropic case. The solutions in
\cite{SALSTR93,CHOKIM04,CHOKIM06-1,CHOKIM06-2} are established
in the homogeneous Sobolev spaces, and, generally, one can not
expect the solutions in
the inhomogeneous Sobolev spaces, if the initial density has compact
support, due to the recent nonexistence result by Li--Wang--Xin
\cite{LWX}. Global existence of strong solutions to the compressible
Navier-Stokes equations, with small initial
data, in the presence of initial vacuum, was first proved by Huang--Li--Xin \cite{HLX12} for the isentropic case (see also Li--Xin
\cite{LIXIN13} for further developments), and later by
Huang--Li \cite{HUANGLI11} and Wen--Zhu \cite{WENZHU17} for the full
case. Due to the finite blow-up results in \cite{XIN98,XINYAN13},
the global solutions
obtained in \cite{HUANGLI11,WENZHU17} must have
infinite entropy somewhere in the vacuum region, if the initial density
has an isolated mass group; however, if the initial density is positive
everywhere but tends to vacuum at the far field, one can expect the
global existence of solutions with uniformly bounded entropy to the full
compressible Navier-Stokes equations, see the recent work by the author and Xin \cite{LIXIN17}.

Note that in the global well-posedness results for system
(\ref{1.1})--(\ref{1.3}) in
\cite{KAZHIKOV77,KAZHIKOV82,ZLOAMO97,ZLOAMO98,CHEHOFTRI00,JIAZLO04,LILIANG16},
the density was assumed to be uniformly away from vacuum.
Global well-posedness of strong solutions to system
(\ref{1.1})--(\ref{1.3}) in the presence of vacuum was proved by
Wen--Zhu \cite{WENZHU13}; however, due to the following assumption
$$
\kappa_0(1+\theta^q)\leq\kappa(\theta)\leq\kappa_1(1+\theta^q),\quad\mbox{ for some }q>0,
$$
made on the heat conductive coefficient $\kappa$ in \cite{WENZHU13},
the case that $\kappa(\theta)\equiv const.$\,was not included there.
The aim of this paper is to study the global well-posedness of strong
solutions to system (\ref{1.1})--(\ref{1.3}), with both constant
viscosity and constant diffusivity, in the presence of vacuum.
This result will be proven
in the Lagrangian flow map coordinate; however, it can be
equivalently translated back to the corresponding one in the
Euler coordinate.

\subsection{The Lagrangian coordinates and main result} Let $y$ be the Lagrangian coordinate, and define the coordinate
transform
between the Lagrangian coordinate $y$ and the Euler coordinate $x$ as
\begin{equation*}
  x=\eta(y,t),
\end{equation*}
where
$\eta(y,t)$ is the flow map determined by $u$, that is
\begin{equation*}\label{flowmap}
  \left\{
  \begin{array}{l}
  \partial_t\eta(y,t)=u(\eta(y,t),t),\\
  \eta(y,0)=y.
  \end{array}
  \right.
\end{equation*}

Denote by $\varrho, v, \vartheta$, and $\pi$ the density, velocity,
temperature, and pressure, respectively, in the Lagrangian coordinate,
that is we define
\begin{eqnarray*}
  \varrho(y,t):=\rho(\eta(y,t),t),\quad v(y,t):=u(\eta(y,t),t), \\
  \vartheta(y,t):=\theta(\eta(y,t),t),\quad \pi(y,t):=p(\eta(y,t),t).
\end{eqnarray*}
Recalling the definition of $\eta(y,t)$, by straightforward
calculations, one can check that
\begin{eqnarray*}
  &(\partial_xu,\partial_x\theta,\partial_xp)
  =\left(\frac{\partial_yv}{\partial_y\eta},
  \frac{\partial_y\vartheta}{\partial_y\eta},\frac{
  \partial_y\pi}{\partial_y\eta}\right),\quad
  (\partial_x^2u,\partial_x^2\vartheta)
  =\left(\frac{1}{\partial_y\eta}\partial_y\left(
  \frac{\partial_yv}{\partial_y\eta}\right),
  \frac{1}{\partial_y\eta}\partial_y\left(
  \frac{\partial_y\vartheta}{\partial_y\eta}\right)\right)\\
  &\partial_t\rho+u\partial_x\rho=\partial_t\varrho,\quad
  \partial_tu+u\partial_xu=\partial_tv,\quad
  \partial_t\theta+u\partial_x\theta=\partial_t\vartheta.
\end{eqnarray*}

Define a function $J=J(y,t)$ as
\begin{equation*}
  J(y,t)=\eta_y(y,t),
\end{equation*}
then it follows
\begin{equation}
\label{eqJ}
\partial_t J=\partial_yv,
\end{equation}
and system (\ref{1.1})--(\ref{1.3}) can be rewritten
in the Lagrangian coordinate as
\begin{eqnarray}
  \partial_t\varrho+\frac{\partial_y v}{J}\varrho&=&0,\label{eqrho}\\
  \varrho\partial_tv-\frac{\mu}{J}\partial_y\left(\frac{\partial_yv}{J}
  \right)+\frac{\partial_y \pi}{J}&=&0,\label{eqv}\\
  c_v\varrho\partial_t\vartheta+\frac{\partial_yv}{J}\pi
  -\frac{\kappa}{J}\partial_y\left(\frac{\partial_y\vartheta}{J}\right)
  &=&\mu\left(\frac{\partial_y v}{J}\right)^2,\label{eqtheta}
\end{eqnarray}
where $\pi=R\varrho\vartheta.$

Due to (\ref{eqJ}) and (\ref{eqrho}), it holds that
$$
\partial_t(J\varrho)=\partial_t J\varrho+J\partial_t\varrho=
\partial_y v\varrho-J\frac{\partial_yv}{J}\varrho=0,
$$
from which, by setting $\varrho|_{t=0}=\varrho_0$ and noticing that
$J|_{t=0}=1$, we have
\begin{equation*}
  J\varrho=\varrho_0.
\end{equation*}
Therefore, one can replace (\ref{eqrho}) with (\ref{eqJ}), by setting
$\varrho=\frac{\varrho_0}{J}$, and rewrite (\ref{eqv}) and
(\ref{eqtheta}), respectively, as
\begin{equation*}
  \varrho_0\partial_tv-\mu\partial_y\left(\frac{\partial_yv}{J}\right)
  +\partial_y\pi=0
\end{equation*}
and
$$
c_v\varrho_0\partial_t\vartheta+\partial_yv\pi
-\kappa\partial_y\left(\frac{\partial_y\vartheta}{J}\right)
=\mu\frac{(\partial_y v)^2}{J}.
$$

In summary, we only need to consider the following system
\begin{eqnarray}
  \partial_tJ&=&\partial_yv,\label{EQJ}\\
  \varrho_0\partial_tv-\mu\partial_y\left(\frac{\partial_yv}{J}\right)
  +\partial_y \pi&=&0,\label{EQv}\\
  c_v\varrho_0\partial_t\vartheta+\partial_yv\pi
  -\kappa\partial_y\left(\frac{\partial_y\vartheta}{J}\right)
  &=&\mu\frac{(\partial_y v)^2}{J},\label{EQtheta}
\end{eqnarray}
where
$$
\pi=R\frac{\varrho_0}{J}\vartheta.
$$

We consider the initial-boundary value problem to system
(\ref{EQJ})--(\ref{EQtheta}) on the interval $(0,L)$, with $L>0$, that 
is system (\ref{EQJ})--(\ref{EQtheta}) is defined in the space-time
domain $(0,L)\times(0,\infty)$. We complement the system with the
following boundary and initial conditions:
\begin{equation}
  \label{BC}
  v(0,t)=v(L,t)=\partial_y\vartheta(0,t)=\partial_y\vartheta(L,t)=0
\end{equation}
and
\begin{equation}
  \label{IC}
  (J,v,\vartheta)|_{t=0}=(1,v_0,\vartheta_0).
\end{equation}

For $1\leq q\leq\infty$ and positive integer $m$, we use $L^q=L^q((0,L))$ and $W^{1,q}=W^{m,q}((0,L))$ to denote the standard Lebesgue and Sobolev spaces, respectively, and in the case that $q=2$, we use $H^m$
instead of $W^{m,2}$. 
We always use $\|u\|_q$ to denote the $L^q$ norm of $u$. 

The main result of this paper is the following:

\begin{theorem}
\label{thm}
  Given $(\varrho_0, v_0, \vartheta_0)\in H^2((0,L))$, satisfying \begin{eqnarray*}
    &&\varrho_0(y)\geq0,\quad\vartheta(y)\geq0,\quad \forall y\in[0,L],\\
    &&v_0(0)=v_0(L)=\partial_y\vartheta_0(0)=\partial_y\vartheta_0(L)=0.
  \end{eqnarray*}
  Assume that the following compatibility conditions hold
  \begin{eqnarray*}
    \mu v_0''+R(\varrho_0\vartheta_0)''=\sqrt{\varrho_0}g_0, \\
    \kappa\vartheta_0''+\mu(v_0')^2-Rv_0'\varrho_0\vartheta_0
    =\sqrt{\varrho_0}h_0,
  \end{eqnarray*}
  for two functions $g_0, h_0\in L^2((0,L))$.

  Then, there is a unique global solution $(J, v, \vartheta)$ to system (\ref{EQJ})--(\ref{EQtheta}), subject to (\ref{BC})--(\ref{IC}), satisfying $J>0$ and $\vartheta\geq0$ on $[0,L]\times[0,\infty)$, and
  \begin{eqnarray*}
    &&J\in C([0,T]; H^2),\quad\partial_tJ\in L^2(0,T; H^2),\\
    &&v\in C([0,T]; H^2)\cap L^2(0,T; H^3),
    \quad\partial_tv\in L^2(0,T; H^1),\\
    &&\vartheta\in C([0,T]; H^2)\cap L^2(0,T; H^3),\quad \partial_t\vartheta\in L^2(0,T; H^1),
  \end{eqnarray*}
  for any $T\in(0,\infty)$.
\end{theorem}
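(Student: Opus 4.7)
\smallskip\noindent\textbf{Proof plan.} The strategy is the standard continuation argument: establish local well-posedness in $H^2$, derive a priori estimates that are independent of the life-span, and use continuation to extend the solution to $[0,\infty)$.

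For \emph{local existence}, I would adapt the linearization/iteration scheme of Cho--Kim \cite{CHOKIM06-2} (designed for the full system with vacuum): at each step update $J$ via the ODE $\partial_t J=\partial_y v$ (so $\varrho=\varrho_0/J$), then solve a pair of linear parabolic problems for $v$ and $\vartheta$ with the boundary conditions (\ref{BC}). The stated compatibility conditions guarantee that $\partial_t v(\cdot,0)$ and $\partial_t\vartheta(\cdot,0)$ lie in the appropriate (density-weighted) $L^2$ spaces, which is exactly what one needs to propagate $H^2$ regularity through the iteration.

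The \emph{core} of the proof is a two-sided pointwise bound on $J$ that is independent of any positive lower bound on $\varrho_0$. Introduce the effective viscous flux $G:=\mu\,\partial_y v/J-\pi$, which by (\ref{EQv}) satisfies $\partial_y G=\varrho_0\,\partial_t v$; combined with the identity $\partial_y v/J=\partial_t\log J$ coming from (\ref{EQJ}), this yields the pointwise, fiberwise linear ODE
\[
\mu\,\partial_t J \;=\; G\,J + R\,\varrho_0\,\vartheta, \qquad J(y,0)=1.
\]
The basic energy identity (multiply (\ref{EQv}) by $v$ and add (\ref{EQtheta})) conserves total energy and gives $L^2_t$ control of $\partial_y v/\sqrt{J}$. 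A Kazhikhov-type manipulation (integrating $G$ against a suitable weight and using the boundary conditions $v|_{0,L}=0$) then provides pointwise control on $G$ from only the basic energy bounds. Solving the ODE yields
\[
c\Bigl(1+\int_0^t R\,\varrho_0(y)\,\vartheta(y,s)\,ds\Bigr) \;\le\; J(y,t) \;\le\; C\Bigl(1+\int_0^t R\,\varrho_0(y)\,\vartheta(y,s)\,ds\Bigr),
\]
which is precisely the observation highlighted in the abstract: $\varrho/\varrho_0=1/J$ is inversely proportional to the time integral of $\vartheta$ along the trajectory. In particular $J\ge c>0$ \emph{uniformly}, so no factor $1/J$ in the system degenerates.

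With $J$ bounded below, (\ref{EQtheta}) becomes uniformly parabolic for $\vartheta$. Non-negativity $\vartheta\ge 0$ follows from a weak maximum principle, and $L^p$-energy estimates combined with the already-controlled dissipation $\int_0^t\!\!\int(\partial_y v)^2/J$ yield $\vartheta\in L^\infty((0,T)\times(0,L))$; this closes the loop by giving the upper bound on $J$ as well. Higher-order $H^2$ estimates then follow by differentiating (\ref{EQv})--(\ref{EQtheta}), testing against $\partial_t v$ and $\partial_t\vartheta$, and using the compatibility conditions to initialize these time derivatives. Uniqueness is a routine $L^2$ energy difference estimate between two solutions, exploiting the strong regularity of one of them. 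The \emph{main obstacle} throughout is the circular dependence between the $J$-bounds and the $\vartheta$-bounds: an upper bound on $J$ requires $\vartheta\in L^1_tL^\infty_y$, while $L^\infty$ control of $\vartheta$ is much easier once $J\ge c>0$. The ODE identity above breaks this loop because the \emph{lower} bound on $J$ uses only the basic energy estimate plus pointwise control of $G$, with no $\vartheta$-bound beyond non-negativity; once this asymmetry is exploited, the remaining parabolic energy estimates are standard if laborious.
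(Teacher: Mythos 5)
Your identification of the key structural fact --- the fiberwise ODE $\mu\partial_tJ=GJ+R\varrho_0\vartheta$, whose integration gives a two-sided bound on $J$ by $1+\int_0^t\varrho_0\vartheta\,d\tau$ and hence a positive lower bound on $J$ from the basic energy alone --- matches the paper's identity (\ref{EQJHB}) and Proposition \ref{PROPEstJ}. One imprecision there: the basic energy does not give \emph{pointwise} control of $G$; what it controls is the $y$-oscillation of $\int_0^tG\,d\tau$ (since $\partial_y\int_0^tG\,d\tau=\varrho_0(v-v_0)$), and the remaining spatially constant part $H(t)$ must be pinned down via $\int_0^LJ\,dy=L$ and a Gronwall argument. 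Pointwise-in-$y$ bounds on $G$ itself only become available, in $L^2_t$, after the full $L^2(H^1)$ estimates.

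The genuine gap is in the step ``with $J$ bounded below, (\ref{EQtheta}) becomes uniformly parabolic \ldots{} and $L^p$-energy estimates combined with the already-controlled dissipation $\int\!\!\int(\partial_yv)^2/J$ yield $\vartheta\in L^\infty$, which closes the loop.'' First, the degeneracy of (\ref{EQtheta}) sits in the evolution term $c_v\varrho_0\partial_t\vartheta$, not in the elliptic part, so a lower bound on $J$ does not make the equation uniformly parabolic where $\varrho_0$ vanishes, and no maximum principle or Moser iteration is available in the usual form. Second, $\int_0^T\!\!\int(\partial_yv)^2/J$ is \emph{not} controlled by the basic energy: Proposition \ref{PROPBASIC} is a conservation law in which the viscous dissipation cancels against the heating term, and bounding that dissipation separately already requires sup-norm information on $\sqrt{\varrho_0}\vartheta$. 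Third, and most importantly, the circularity that must be broken is between the \emph{upper} bound of $J$ and the sup-norm of $\vartheta$: any $L^\infty_y$ control of $\vartheta$ goes through $\|\partial_y\vartheta\|_2$, which the dissipation only yields as $\|\partial_y\vartheta/\sqrt J\|_2$, i.e.\ after multiplying by $\|J\|_\infty^{1/2}$ --- exactly the quantity being sought. The paper resolves this with the density-weighted embedding of Proposition \ref{PROPEstRhoWei} (with a small parameter $\eta$ in front of $\|\partial_y\vartheta/\sqrt J\|_2^2$) fed into a coupled $L^\infty(L^2)$ estimate for $\frac{v^2}{2}+c_v\vartheta$ and $v^2$ plus Gronwall (Proposition \ref{PROPEstL2}); nothing in your outline substitutes for this. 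A similar issue recurs at top order: a direct $H^1$ estimate for $\vartheta$ fails because of $\mu(\partial_yv)^2/J$ and the degenerate $\varrho_0\partial_t\vartheta$, which is why the paper instead estimates $\sqrt{\varrho_0}\partial_t\vartheta$ and $\partial_yG/\sqrt{\varrho_0}$. Finally, the paper does not run local existence plus continuation at all: it regularizes $\varrho_0\mapsto\varrho_0+\varepsilon$, invokes the Kazhikhov--Shelukhin global theory for the non-vacuum approximations, proves the estimates uniformly in $\varepsilon$, and passes to the limit; your route would additionally require a vacuum local theory and a continuation criterion in the stated class.
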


\begin{remark}
The same result as in Theorem \ref{thm} still holds if replacing the
boundary condition $\partial_y\vartheta(0,t)=\partial_y\vartheta(L,t)=0$
by one of the following three
\begin{eqnarray*}
  \vartheta(0,t)=\vartheta(L,t)=0,\\
  \vartheta(0,t)=\partial_y\vartheta(L,t)=0,\\
  \partial_y\vartheta(0,t)=\vartheta(L,t)=0, 
\end{eqnarray*}
and the proof is exactly the same as the one presented in this paper, 
the only different is that the basic energy identity in Proposition \ref{PROPBASIC} will then be an inequality. 
\end{remark}

\begin{remark}
  The argument presented in this paper also 
  works for the free boundary value
  problem to the same system. Because, if rewritten the system in the Lagrangian
  coordinates,
  the only difference between the initial boundary value problem and the
  free boundary value problem is the boundary conditions for $v$: in the free boundary problem, the boundary conditions for $v$ in (\ref{BC}) are replaced by
  $$
  \mu\frac{\partial_yv}{J}-\pi\Big|_{y=0,L}=0. 
  $$
  Note that all the energy estimates obtained in this paper hold if replacing the boundary condition on $v$ in (\ref{BC}) with the above ones, by slightly modifying the proof. 
\end{remark}

The argument used in Kazhikhov-Shelukhin \cite{KAZHIKOV77}, in which
the non-vacuum case was considered, does not apply directly to 
the vacuum case. One main observation 
in \cite{KAZHIKOV77} is: the lower bound of the density is inversely
proportional to the time integral of the upper bound of the temperature,
along the trajectory. Note that this only holds for the
case that the density has a positive uniform lower bound. 
To overcome the difficulty caused by the lack of the positive lower bound of the density (it is of this case if the lower bound
of the initial density is zero),
our observation is: the ratio of the
density to its initial value is inversely 
proportional to the time integral of the temperature, along the
trajectory, or, equivalently, the upper bound of
$J$ is proportional to the time integral of the
upper bound of the temperature. This 
observation holds for both the
vacuum and non-vacuum cases, which, in particular, reduces to the one
in \cite{KAZHIKOV77} for the non-vacuum case; this also indicates 
the advantage of taking $J$ rather than $\varrho$ as one the unknowns. 
 
The key issue of proving Theorem \ref{thm} is to establish the
appropriate
a priori energy estimates, up to any finite time, of the solutions to
system (\ref{EQJ})--(\ref{EQtheta}), subject to (\ref{BC})--(\ref{IC}).
There are four main stages for carrying out the desired a priori energy
estimates. In the first
stage, we derive from (\ref{EQJ})--(\ref{EQv}) an identity 
$$
1+\frac R\mu\varrho_0(y)\int_0^t\vartheta(y,\tau)
H(\tau)B(y,\tau)d\tau=J(y,t)H(t)B(y,t),
$$
for some functions $H(t)$ and $B(y,t)$. The temperature
equation is not used at all in deriving the above identity, and this
identity is in the spirit of the one in \cite{KAZHIKOV77}, but in
different Lagrangian coordinates. The basic energy estimate implies that
both $H$ and $B$ are uniformly away from
zero and uniformly bounded, up to any finite time. 
As a direct corollary of the above
identity, one can obtain the uniform positive
lower bound of $J$, and the control of the upper bound of $J$ in terms
of the time integral of $\vartheta$. By using the positive lower bound
of $J$, one obtains a density-weighted embedding inequality (which can
be viewed as the replacement of the Sobolev embedding inequality when
the vacuum is involved) for $\vartheta$, see (ii) of Proposition
\ref{PROPEstRhoWei}, which implies that the upper bound of
$\sqrt{\varrho_0}\vartheta$ can be
controlled by that of $J$, up to a small dependence on
$\|\frac{\partial_y\vartheta}{\sqrt J}\|_2$, i.e., the term
$\eta\|\frac{\partial_y\vartheta}{\sqrt J}\|_2$, with a small positive
$\eta$. This combined with the above identity leads
us to carry out the $L^\infty(L^2)$ type estimates 
on $\vartheta$, or, more precisely, on $\sqrt{\varrho_0}\vartheta$. 
In the second stage, we carry out the $L^\infty(L^2)$ energy estimate 
on $\sqrt{\varrho_0}\vartheta$, and, at the same time, the
$L^\infty(L^2)$ energy estimate will be involved naturally, due to the 
coupling structure 
between $v$ and $\vartheta$ in the system; 
as a conclusion of this stage,
by making use the control relationship between the upper bounds of
$\sqrt{\varrho_0}\vartheta$ and $J$ obtained in the first stage, we are
able to obtain the a priori upper bound of $J$ and the a priori
$L^\infty(L^2)\cap L^2(H^1)$ type estimates on $(v,\vartheta)$. In the
third stage, by investigating the effective viscous flux
$G:=\mu\frac{\partial_yv}{J}-\pi$ and working
on its $L^\infty(L^2)\cap L^2(H^1)$ type a priori estimate, we are 
able to get the a priori $L^\infty(H^1)$ estimate on $(J, v)$; however,
due to the presence of the term $\frac{\mu}{J}(\partial_yv)^2$
and the degeneracy of the leading term $\varrho_0\partial_t\vartheta$ in
the $\vartheta$ equation, we are not able to obtain the
corresponding $L^\infty(H^1)$ estimate on $\vartheta$, without appealing
to higher order energy estimates than $H^1$. In the fourth stage, we
carry out the a priori $L^\infty(H^2)$ type estimates, which are
achieved through performing the $L^\infty(L^2)$ type energy estimate
on $\sqrt{\varrho_0}\partial_t\vartheta$ and $L^\infty(H^1)$ type
estimate on $G$. It should be mentioned that the desired a priori $L^\infty(H^2)$ estimates on $\vartheta$ is obtained without knowing its a priori $L^\infty(H^1)$ bound in advance.

The rest of this paper is arranged as follows: in the next section,
Section \ref{SecApri}, which is the main part of this paper,
we consider the global existence and the a priori estimates to system
(\ref{EQJ})--(\ref{EQtheta}), subject to (\ref{BC})--(\ref{IC}), in the
absence of vacuum, while Theorem \ref{thm} is proven in the last
section.

Throughout this paper, we use $C$ to denote a general positive constant which may different from line to line. 

\section{Global existence and a priori estimates in the absence of vacuum}
\label{SecApri}

We first recall the global existence results due to
Kazhikohov-Shelukhin \cite{KAZHIKOV77} stated in the following
proposition. The original
result in \cite{KAZHIKOV77} was stated in the Lagrangian mass
coordinates,
rather than the Lagrangian map flow coordinates as here, and
the initial data $(\varrho_0, v_0, \vartheta_0)$ was assumed in $H^1$;
however, due to the sufficient regularities of the solutions
established in \cite{KAZHIKOV77}, in particular the $L^1(0,T;
W^{1,\infty})$ of $v$, the global existence result established there can
be translated to the corresponding one in the Lagrangian map flow
coordinates, and one can show that the solutions have correspondingly
more regularities if the initial data has more regularities as stated
in the following proposition.

\begin{proposition}
  \label{PROPGLOBAL}
For any $(\varrho_0, v_0, \vartheta_0)\in H^2$, satisfying $v(0)=v(L)=0$, and
\begin{eqnarray*}
  \min\left\{\inf_{y\in(0,L)}\varrho_0(y),\inf_{y\in(0,L)}\vartheta_0(y)
  \right\}>0,
\end{eqnarray*}
there is a unique global solution $(J,v,\varrho)$ to system (\ref{EQJ})--(\ref{EQtheta}), subject to (\ref{BC})--(\ref{IC}), satisfying $J,\vartheta>0$ on $(0,L)\times(0,\infty)$, and
 \begin{eqnarray*}
J\in C([0,T]; H^2),&& \partial_tJ\in L^\infty(0,T; H^1)\cap L^2(0,T; H^2),\\
v\in C([0,T]; H^2)\cap L^2(0,T; H^3), && \partial_t v\in L^\infty(0,T; L^2)\cap L^2(0,T; H^1),\\
\vartheta\in C([0,T]; H^2)\cap L^2(0,T; H^3), && \partial_t\vartheta\in L^\infty(0,T; L^2)\cap L^2(0,T; H^1),
  \end{eqnarray*}
  for any $T\in(0,\infty)$.
\end{proposition}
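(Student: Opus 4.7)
The plan is to deduce this proposition from the original Kazhikhov--Shelukhin theorem \cite{KAZHIKOV77} in two independent steps: a change of Lagrangian coordinates, followed by a parabolic bootstrap from $H^1$ to $H^2$ which is straightforward in the strictly non-vacuum regime.

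First, I would invoke \cite{KAZHIKOV77} in the mass coordinate $\xi=\int_0^y\varrho_0(y')\,dy'\in[0,M]$, where $M:=\int_0^L\varrho_0$. Because $\varrho_0\in H^2\hookrightarrow C^1$ and $\inf\varrho_0>0$, the map $y\mapsto\xi$ is a diffeomorphism whose inverse has derivative $1/\varrho_0\in H^2$, so this change of variables preserves $H^k$ regularity for $k=0,1,2$ with constants depending only on $\|\varrho_0\|_{H^2}$ and $\min\varrho_0$. The $H^1$ subclass of the data therefore falls within the scope of Kazhikhov--Shelukhin, and yields a unique global strong solution in the mass variable with strictly positive pointwise bounds on density and temperature, and with the flow velocity in $L^1(0,T;W^{1,\infty})$; this last property, as the excerpt points out, is exactly what allows the inverse diffeomorphism to pull the solution back to the flow-map coordinate, producing a global solution $(J,v,\vartheta)$ of (\ref{EQJ})--(\ref{EQtheta}) inheriting all the $H^1$-level regularities of \cite{KAZHIKOV77}, together with pointwise bounds $J,\vartheta\geq c>0$ on $[0,L]\times[0,T]$.

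Next, I would bootstrap the regularity from $H^1$ to $H^2$. The bounds $\inf J>0$ and $\vartheta\in L^\infty$ on $[0,L]\times[0,T]$ render (\ref{EQv})--(\ref{EQtheta}) a uniformly parabolic system for $(v,\vartheta)$ with $L^\infty(0,T;H^1)$ coefficients. The classical approach is to differentiate both equations in time, test against $\partial_t v$ and $\partial_t\vartheta$, and close the resulting energy inequalities by Gronwall with constants depending on $T$, $\min\varrho_0$, $\min\vartheta_0$, and $\|(\varrho_0,v_0,\vartheta_0)\|_{H^2}$. The initial values of $\partial_t v$ and $\partial_t\vartheta$ in $L^2$ are read off directly from (\ref{EQv})--(\ref{EQtheta}) at $t=0$, with no compatibility hypothesis required because $\varrho_0\geq\min\varrho_0>0$. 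This produces $\partial_t v,\partial_t\vartheta\in L^\infty(0,T;L^2)\cap L^2(0,T;H^1)$. Viewing (\ref{EQv})--(\ref{EQtheta}) as second-order elliptic equations in $y$ at fixed $t$, with right-hand sides controlled by $\varrho_0\partial_t v$, $\varrho_0\partial_t\vartheta$ and nonlinear lower-order terms already handled in the previous step, then yields $v,\vartheta\in L^\infty(0,T;H^2)\cap L^2(0,T;H^3)$. The claims $J\in C([0,T];H^2)$ and $\partial_tJ\in L^\infty(0,T;H^1)\cap L^2(0,T;H^2)$ follow from $\partial_tJ=\partial_y v$ by direct time integration starting from $J|_{t=0}=1$.

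The main obstacle, and really the only delicate point in this non-vacuum setting, is the bookkeeping of the dependence of the $H^2$ constants on $\min\varrho_0$ and $\min\vartheta_0$. These bounds inevitably blow up as one approaches the vacuum regime, which is precisely why this proposition cannot be applied directly to prove Theorem \ref{thm}: it serves only as an approximation device, and the vacuum-uniform estimates are the business of Section \ref{SecApri}, where the identity $1+\frac{R}{\mu}\varrho_0\int_0^t\vartheta HB\,d\tau=JHB$ mentioned in the introduction replaces the lost lower bound on $\varrho$.
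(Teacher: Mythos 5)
Your proposal is correct and follows exactly the route the paper itself indicates: the paper does not prove Proposition \ref{PROPGLOBAL} in detail but recalls it from Kazhikhov--Shelukhin, explaining in the preceding paragraph that the original $H^1$ result in Lagrangian mass coordinates transfers to the flow-map coordinates thanks to the $L^1(0,T;W^{1,\infty})$ bound on the velocity, and that higher regularity of the data yields correspondingly higher regularity of the solution. Your two steps (the time-independent diffeomorphism $\xi=\int_0^y\varrho_0$, valid since $\inf\varrho_0>0$, followed by the standard time-differentiation/elliptic-regularity bootstrap to $H^2$, with constants allowed to depend on $\min\varrho_0$ and $\min\vartheta_0$) are precisely the details the paper leaves implicit.
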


In the rest of this section, we always
assume that $(J,v,\vartheta)$ is the
unique global solution obtained in Proposition \ref{PROPGLOBAL},
and we will establish a series of a priori
estimates of $(J,v,\vartheta)$ independent of the lower bound
of the density.

We start with the basic energy identity in the following proposition.

\begin{proposition}
\label{PROPBASIC}
It holds that
\begin{equation*}
  \int_0^LJ(y,t)dy=L
\end{equation*}
and
$$
\left(\int_0^L\left(\frac{\varrho_0}{2}v^2+c_v\varrho_0\vartheta\right)dy
\right)(t)=E_0,
$$
for any $t\in(0,\infty)$, where $E_0:=
\int_0^L\left(\frac{\varrho_0}{2}v_0^2+c_v\varrho_0\vartheta_0\right)dy$.
\end{proposition}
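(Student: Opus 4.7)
The proof proposal has two parts that are essentially routine computations once the right quantities are tested against the right equations. The plan is to integrate the equation for $J$ directly for part one, and to form the standard mechanical-plus-thermal energy combination for part two.

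For the identity $\int_0^L J(y,t)\,dy = L$, I would integrate (\ref{EQJ}) in $y$ over $(0,L)$ to get
\begin{equation*}
\frac{d}{dt}\int_0^L J(y,t)\,dy = \int_0^L \partial_y v\,dy = v(L,t)-v(0,t)=0,
\end{equation*}
by the boundary conditions in (\ref{BC}). Since $J(y,0)=1$ by (\ref{IC}), the value of the integral is $L$ for all $t\ge0$.

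For the energy identity, I would test (\ref{EQv}) with $v$ and integrate by parts using $v(0,t)=v(L,t)=0$ to obtain
\begin{equation*}
\frac12\frac{d}{dt}\int_0^L \varrho_0 v^2\,dy + \mu\int_0^L \frac{(\partial_y v)^2}{J}\,dy - \int_0^L \pi\,\partial_y v\,dy = 0.
\end{equation*}
Next I would integrate (\ref{EQtheta}) in $y$; the boundary term $\kappa\bigl[\tfrac{\partial_y\vartheta}{J}\bigr]_0^L$ vanishes by the Neumann condition $\partial_y\vartheta(0,t)=\partial_y\vartheta(L,t)=0$, leaving
\begin{equation*}
c_v\frac{d}{dt}\int_0^L \varrho_0\vartheta\,dy + \int_0^L \pi\,\partial_y v\,dy = \mu\int_0^L \frac{(\partial_y v)^2}{J}\,dy,
\end{equation*}
where I used that $\varrho_0$ is time-independent so $\int\varrho_0\partial_t\vartheta = \tfrac{d}{dt}\int\varrho_0\vartheta$. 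Adding the two displayed identities causes the pressure-work terms $\pm\int\pi\,\partial_y v$ and the viscous dissipation terms $\pm\mu\int(\partial_y v)^2/J$ to cancel, yielding
\begin{equation*}
\frac{d}{dt}\int_0^L\Bigl(\tfrac{\varrho_0}{2}v^2 + c_v\varrho_0\vartheta\Bigr)\,dy = 0.
\end{equation*}
Integrating in time and evaluating at $t=0$ via (\ref{IC}) gives the stated conservation.

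There is no real obstacle: the only things to check are that the regularity provided by Proposition \ref{PROPGLOBAL} (namely $v\in C([0,T];H^2)$, $\vartheta\in C([0,T];H^2)$, $J\in C([0,T];H^2)$ with $J>0$) is sufficient to justify the integrations by parts and the exchange of $\partial_t$ with the spatial integral, which it clearly is. The choice of the Neumann boundary condition on $\vartheta$ is what makes the identity an equality rather than an inequality; with the Dirichlet variants listed in the remark, the boundary term $\kappa\vartheta\,\tfrac{\partial_y\vartheta}{J}|_0^L$ would contribute a sign-definite loss, explaining the inequality mentioned there.
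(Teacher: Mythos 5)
Your proof is correct and follows essentially the same route as the paper: integrate (\ref{EQJ}) in $y$ for the first identity, then test (\ref{EQv}) with $v$, integrate (\ref{EQtheta}) directly in $y$, and add so that the pressure-work and viscous-dissipation terms cancel. The only (inessential) quibble is in your closing aside: for the Dirichlet variants the relevant boundary term is $\kappa\bigl[\tfrac{\partial_y\vartheta}{J}\bigr]_0^L$ from integrating (\ref{EQtheta}) directly, not $\kappa\vartheta\,\tfrac{\partial_y\vartheta}{J}\big|_0^L$, but this does not affect the proof of the stated proposition.
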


\begin{proof}
  The first conclusion follows directly from integrating (\ref{EQJ})
  with respect to $y$ over $(0,L)$ and using the boundary condition (\ref{BC}). Multiplying
  equation (\ref{EQv}) by $v$, integrating the resultant over $(0,L)$,
  one gets from integrating by parts that
  $$
  \frac12\frac{d}{dt}\int_0^L\varrho_0v^2dy+\mu\int_0^L\frac{(\partial_y v)^2}{J}dy =\int_0^L\partial_yv\pi dy.
  $$
  Integrating (\ref{EQtheta}) over $(0,L)$ yields
  $$
  c_v\frac{d}{dt}\int_0^L\varrho_0\vartheta dy+\int_0^L\partial_yv\pi dy
  =\mu\int_0^L\frac{(\partial_yv)^2}{J}dy,
  $$
  which, summed with the previous equality, leads to
  $$
  \frac{d}{dt}\int_0^L\left(\frac{\varrho_0}{2}v^2+c_v\varrho_0\vartheta
  \right)dy
  =0,
  $$
  the second conclusion follows.
\end{proof}

\subsection{A priori $L^2$ estimates} In this subsection, we
will derive the uniform positive
lower and upper bounds of $J$ and the
a priori $L^\infty(0,T; L^2)\cap
L^2(0,T; H^1)$ estimates of $v$ and $\vartheta$.

Before carrying out the desired estimates, we first
derive an equality, i.e.,
(\ref{EQJHB}) in the below, in
the spirit of \cite{KAZHIKOV77}. As will be shown later, this equality
leads to the positive lower bound of $J$, and it will be combined with
the $L^2$ type energy inequalities to get the upper bound of $J$ and the
a priori $L^\infty(0,T; L^2)\cap
L^2(0,T; H^1)$ estimates of $v$ and $\vartheta$.
Due to (\ref{EQJ}), it follows from (\ref{EQv}) that
$$
\varrho_0\partial_t v-\mu\partial_y\partial_t\log J+\partial_y\pi=0.
$$
Integrating the above equation with respect to $t$ over $(0,t)$ yields
$$
\varrho_0(v-v_0)-\mu\partial_y\log J+ \partial_y\left(\int_0^t\pi d\tau\right) =0,
$$
from which, integrating with respect to $y$ over $(z,y)$, one obtains
\begin{align*}
  \int_z^y\varrho_0(v-v_0)d\xi -\mu\left(\log J(y,t)-\log J(z,t) \right) +\int_0^t(\pi(y,\tau)-\pi(z,\tau))d\tau=0,
\end{align*}
for any $y,z\in(0,L)$.
Using
$$
\int_z^y\varrho_0(v-v_0)d\xi= \int_0^y\varrho_0(v-v_0)d\xi- \int_0^z\varrho_0(v-v_0)d\xi,
$$
and rearranging the terms, we obtain
\begin{eqnarray*}
  &&\int_0^y
  \varrho_0(v-v_0)d\xi-\mu\log J(y,t)+\int_0^t\pi(y,\tau)d\tau \\
  &=&\int_0^z
  \varrho_0(v-v_0)d\xi-\mu \log J(z,t)+\int_0^t\pi(z,\tau)d\tau,\qquad
  \forall y,z\in(0,L).
\end{eqnarray*}

Therefore, the function
$$
\int_0^y
  \varrho_0(v-v_0)d\xi-\mu\log J(y,t)+\int_0^t\pi(y,\tau)d\tau
$$
is independent of $y$, and we denote it by $h(t)$, that is
\begin{equation*}
  \int_0^y
  \varrho_0(v-v_0)d\xi-\mu\log J+\int_0^t\pi d\tau=h(t),
\end{equation*}
from which, recalling $\pi=R\frac{\varrho_0}{J}\vartheta$, one gets
\begin{equation}
  \frac1J\exp\left\{\frac R\mu\varrho_0\int_0^t\frac{\vartheta}{J}d\tau
  \right\}=H(t)B(y,t), \label{EQH}
\end{equation}
or equivalently
\begin{equation}
  \label{EQH'}
  \exp\left\{\frac R\mu\varrho_0\int_0^t\frac{\vartheta}{J}d\tau
  \right\}=J(y,t)H(t)B(y,t)
\end{equation}
where
$$
H(t)=\exp\left\{\frac{h(t)}{\mu}\right\},\quad
B(y,t)=\exp\left\{-\frac1\mu\int_0^y\varrho_0(v-v_0)d\xi\right\}.
$$

Multiplying (\ref{EQH}) by $\frac R\mu\varrho_0\vartheta$ one obtains
$$
\partial_t\left(\exp\left\{\frac R\mu\varrho_0\int_0^t\frac{\vartheta}{J}d\tau\right\} \right)=\frac R\mu\varrho_0\vartheta HB,
$$
which gives
$$
\exp\left\{\frac R\mu\varrho_0\int_0^t\frac{\vartheta}{J}d\tau\right\}
=1+\frac R\mu\varrho_0\int_0^t\vartheta HBd\tau.
$$
Combining this with (\ref{EQH'}), one gets
\begin{equation}
\label{EQJHB}
  1+\frac R\mu\varrho_0(y)\int_0^t\vartheta(y,\tau) H(\tau)B(y,\tau)d\tau=J(y,t)H(t)B(y,t),
\end{equation}
for any $y\in(0,L)$ and any $t\in[0,\infty)$.

A prior positive lower bound of $J$ and the control of the
upper bound of $J$ in terms of $\varrho_0\vartheta$ are stated in the
following proposition:

\begin{proposition}
We have the following estimate:
  \label{PROPEstJ}
\begin{eqnarray*}
  (m_1f_1(t))^{-1}\leq J(y,t)\leq m_1^2+\frac R\mu
  m_1^3f_1(t)\int_0^t\varrho_0\vartheta d\tau,
\end{eqnarray*}
for any $y\in(0,L)$ and any $t\in[0,\infty)$, where
$$
m_1=\exp\left\{\frac2\mu\sqrt{2\|\varrho_0\|_1E_0}\right\},\quad
f_1(t)=m_1\exp\left\{
  \frac{Rm_1^2E_0t}{\mu c_vL}\right\}.
$$
\end{proposition}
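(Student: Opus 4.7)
The plan is to read the bounds on $J$ off the key identity (\ref{EQJHB}) after controlling the auxiliary functions $B(y,t)$ and $H(t)$ uniformly, using only the conservation laws from Proposition \ref{PROPBASIC}.

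First I would bound $B(y,t)$ pointwise. Proposition \ref{PROPBASIC} gives $\int_0^L\varrho_0 v^2 dy\leq 2E_0$ uniformly in $t$ (and the same for $v_0$), so the Cauchy--Schwarz and triangle inequalities yield
\[
\left|\int_0^y\varrho_0(v-v_0) d\xi\right|\leq \|\varrho_0\|_1^{1/2}\|\sqrt{\varrho_0}(v-v_0)\|_2\leq 2\sqrt{2\|\varrho_0\|_1 E_0}
\]
for every $(y,t)$, which gives the two-sided bound $m_1^{-1}\leq B(y,t)\leq m_1$.

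Next I would bound $H(t)$. For the lower bound, I integrate (\ref{EQH}) over $(0,L)$ and discard the nonnegative argument of the exponential: this yields $H(t)\int_0^L B dy\geq \int_0^L J^{-1}dy$; combining $\int_0^L J dy=L$ with Cauchy--Schwarz forces $\int_0^L J^{-1}dy\geq L$, and together with $\int_0^L B dy\leq m_1 L$ this delivers $H(t)\geq m_1^{-1}$. The upper bound is the main step: integrating (\ref{EQJHB}) over $(0,L)$, the left side dominates $m_1^{-1} L H(t)$ (using $B\geq m_1^{-1}$ and $\int J dy = L$), while the right side, via $B\leq m_1$ and the basic bound $\int_0^L\varrho_0\vartheta dy\leq E_0/c_v$, is bounded by $L+\frac{Rm_1 E_0}{\mu c_v}\int_0^t H(\tau)d\tau$. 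Rearranging and applying Gronwall's lemma produces $H(t)\leq f_1(t)$ with precisely the stated constants.

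Finally, the bounds on $J$ follow algebraically from (\ref{EQJHB}). Since $JHB\geq 1$, one immediately has $J\geq(HB)^{-1}\geq (m_1 f_1(t))^{-1}$. For the upper bound I rewrite
\[
J=\frac{1}{HB}+\frac{R\varrho_0}{\mu HB}\int_0^t\vartheta H B d\tau
\]
and estimate $(HB)^{-1}\leq m_1^2$ on both terms, $H(\tau)\leq f_1(\tau)\leq f_1(t)$ (as $f_1$ is increasing in $t$), and $B\leq m_1$ inside the integrand; the stated upper bound on $J$ falls out directly. The main obstacle is setting up the Gronwall step for $H$ correctly: the bounds $B\in[m_1^{-1},m_1]$ must be used with opposite senses on the two sides of the integrated identity in order to produce an integral inequality amenable to Gronwall.
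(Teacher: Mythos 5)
Your proposal is correct and follows essentially the same route as the paper: bound $B$ via the basic energy identity, integrate the key identity (\ref{EQJHB}) over $(0,L)$ to trap $H$ between $m_1^{-1}$ and $f_1(t)$ by Gronwall, then read off both bounds on $J$ pointwise. The only cosmetic difference is your lower bound on $H$, obtained from (\ref{EQH}) together with the Cauchy--Schwarz estimate $\int_0^L J^{-1}dy\geq L$, whereas the paper simply uses that the left-hand side of (\ref{EQJHB}) is at least $1$; both are equally valid.
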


\begin{proof}
By Proposition \ref{PROPBASIC}, it follows from the H\"older inequality
that
\begin{eqnarray*}
  \left|\int_0^y\varrho_0(v-v_0)d\xi\right|\leq\int_0^L(|\varrho_0v|+|
  \varrho_0v_0|)d\xi
  \leq 2\|\varrho_0\|_1^{\frac12}(2E_0)^{\frac12}
\end{eqnarray*}
and, thus,
\begin{equation}
\label{EstB}
\underline m:=\exp\left\{-\frac2\mu\sqrt{2\|\varrho_0\|_1E_0}
\right\}
\leq B(y,t)\leq \exp\left\{\frac2\mu\sqrt{2\|\varrho_0\|_1E_0}
\right\}=: m_1.
\end{equation}

Integrating (\ref{EQJHB}) over $(0,L)$, it follows from (\ref{EstB})
and Proposition \ref{PROPBASIC} that
\begin{eqnarray*}
  L&\leq&\int_0^L\left(1+\frac R\mu\varrho_0\int_0^t\vartheta HBd\tau\right)dy=H(t)\int_0^LJBdy \\
  &\leq&m_1 H(t) \int_0^LJdy=m_1LH(t)
\end{eqnarray*}
and
\begin{eqnarray*}
  \underline mLH(t)&=&H(t)\underline m\int_0^LJdy\\
  &\leq&H(t)\int_0^LJBdy
   = \int_0^L\left(1+\frac R\mu\varrho_0\int_0^t\vartheta HBd\tau\right)dy\\
  &\leq&L+\frac{R m_1}{\mu}\int_0^t\int_0^L\varrho_0\vartheta dyH d\tau
  \leq L+\frac{Rm_1E_0}{\mu c_v}\int_0^tHd\tau.
\end{eqnarray*}
Therefore, we have
\begin{eqnarray*}
   m_1^{-1}\leq H(t)\leq \underline m^{-1}+\frac{Rm_1E_0}{\mu c_v\underline mL}\int_0^tHd\tau,
\end{eqnarray*}
from which, by the Gronwall inequality, one further obtains
\begin{equation}
\label{EstH}
   m_1^{-1}\leq H(t)\leq \underline m^{-1}\exp\left\{
  \frac{Rm_1E_0t}{\mu c_v\underline mL}\right\}=: f_1(t),\quad t\in[0,T].
\end{equation}

Due to (\ref{EstB}) and (\ref{EstH}), it follows from (\ref{EQJHB}) that
\begin{eqnarray*}
  1\leq 1+\frac R\mu\varrho_0\int_0^t\vartheta HBd\tau=J(y,t)H(t)B(y,t)
  \leq J(y,t) m_1 f_1(t),
\end{eqnarray*}
and
\begin{eqnarray*}
   m_1^{-1}\underline mJ(y,t)&\leq&H(t)B(y,t)J(y,t)
  =1+\frac R\mu\varrho_0\int_0^t\vartheta HBd\tau\\
  &\leq&1+\frac R\mu m_1 f_1(t)\int_0^t\varrho_0\vartheta d\tau.
\end{eqnarray*}
Therefore, we have
\begin{eqnarray*}
  ( m_1 f_1(t))^{-1}\leq J(y,t)\leq  m_1\underline m^{-1}+\frac{R m_1^2 f_1(t)}{\mu\underline m}\int_0^t\varrho_0\vartheta d\tau,
\end{eqnarray*}
for any $y\in(0,L)$ and any $t\in[0,T]$, proving the conclusion.
\end{proof}

As a preparation of deriving the a priori upper bound of $J$ and the
a priori $L^\infty(0,T; L^2)\cap L^2(0,T; H^1)$ type estimates on
$(v,\vartheta)$, we prove the following proposition, which,
in particular, gives the density-weighted estimate of $\vartheta$.

\begin{proposition}
  \label{PROPEstRhoWei}
We the following two items:

(i) It holds that
\begin{eqnarray*}
\|\varrho_0^2\vartheta\|_\infty^2&\leq&\left(\frac{E_0}{c_v}\right)^2
\left(\frac{8\bar\varrho^2}{L^2}+32\|\varrho_0'\|_\infty^2\right)
+6\bar\varrho^{\frac{10}{3}}\left(\frac{E_0}{c_v}\right)^{\frac23}
\left\|\frac{\partial_y\vartheta}
{\sqrt J}\right\|_2^{\frac43}\|J\|_\infty^{\frac23},\\
\|\vartheta\|_\infty&\leq&\sqrt L
\left\|\frac{\partial_y\vartheta}
{\sqrt J}\right\|_2+\frac{2E_0}{c_v\omega_0\bar\varrho},
\end{eqnarray*}
where
$$
\bar\varrho=\|\varrho_0\|_\infty,\quad\omega_0=|\Omega_0|,
\quad\Omega_0:=\left\{y\in(0,L)\Big|\varrho_0(y)\geq \frac{\bar\varrho}{2}\right\}.
$$

(ii) As a consequence of (i), we have
$$
\|\sqrt{\varrho_0}\vartheta\|_\infty^2 \leq\eta\left\|\frac{\partial_y\vartheta}
{\sqrt J}\right\|_2^2+C_\eta(\|J\|_\infty^2+1),
$$
for any $\eta\in(0,\infty)$, where $C_\eta$ is a positive constant
depending only on $\eta$ and $N_1$, where
$$
N_1:=\frac{E_0}{c_v}+\bar\varrho+\frac{1}{\bar\varrho}+L+\frac1L+
\frac{1}{\omega_0}+\|\varrho_0'\|_\infty.
$$
\end{proposition}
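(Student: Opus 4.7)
The plan is to prove the two $L^\infty$ bounds in part (i) independently, and then deduce (ii) from them by an algebraic interpolation. For the second estimate in (i), I would use Proposition \ref{PROPBASIC} together with $\varrho_0\geq\bar\varrho/2$ on $\Omega_0$ to deduce $\int_{\Omega_0}\vartheta\,dy\leq 2E_0/(c_v\bar\varrho)$, and then pick $y_1\in\Omega_0$ by the mean value theorem so that $\vartheta(y_1)\leq 2E_0/(c_v\omega_0\bar\varrho)$. Writing $\vartheta(y)=\vartheta(y_1)+\int_{y_1}^y(\partial_y\vartheta/\sqrt J)\sqrt J\,d\xi$ and applying Cauchy--Schwarz with $\int_0^L J\,dy=L$ gives the claimed bound.

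For the first estimate in (i), set $f:=\varrho_0^2\vartheta$, and choose $y_0$ by the mean value theorem from $\int_0^L f\,dy\leq\bar\varrho E_0/c_v$, so that $|f(y_0)|\leq\bar\varrho E_0/(c_vL)$. Starting from
$$
\|f\|_\infty^2\leq f(y_0)^2+2\int_0^L |f||f'|\,dy
$$
and decomposing $f'=2\varrho_0\varrho_0'\vartheta+\varrho_0^2\partial_y\vartheta$, the $\varrho_0'$ piece contributes $4\|\varrho_0'\|_\infty\int\varrho_0^3\vartheta^2\,dy\leq 4\|\varrho_0'\|_\infty\|f\|_\infty E_0/c_v$, a term linear in $\|f\|_\infty$. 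The crucial move for the $\partial_y\vartheta$ piece is the weighted Cauchy--Schwarz
$$
\int\varrho_0^4\vartheta|\partial_y\vartheta|\,dy\leq\Bigl(\int\varrho_0^8\vartheta^2 J\,dy\Bigr)^{1/2}\Bigl\|\frac{\partial_y\vartheta}{\sqrt J}\Bigr\|_2,
$$
combined with $\varrho_0^8\vartheta^2=(\varrho_0^2\vartheta)\cdot\varrho_0^6\vartheta\leq\bar\varrho^5\|f\|_\infty\varrho_0\vartheta$ and $\int\varrho_0\vartheta\,dy\leq E_0/c_v$, which produces a term of order $\bar\varrho^{5/2}(E_0/c_v)^{1/2}\|f\|_\infty^{1/2}\|J\|_\infty^{1/2}\|\partial_y\vartheta/\sqrt J\|_2$. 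Young's inequality with conjugate pair $(2,2)$ on the linear term and $(4,4/3)$ on the half-power term absorbs $\|f\|_\infty^2$ into the left-hand side and yields the advertised $\|\partial_y\vartheta/\sqrt J\|_2^{4/3}\|J\|_\infty^{2/3}$ factor.

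For (ii), the essential algebraic identity is $\varrho_0\vartheta^2=(\varrho_0^2\vartheta)^{1/2}\vartheta^{3/2}$, so
$$
\|\sqrt{\varrho_0}\vartheta\|_\infty^2=\|\varrho_0\vartheta^2\|_\infty\leq\|\varrho_0^2\vartheta\|_\infty^{1/2}\|\vartheta\|_\infty^{3/2}.
$$
Inserting the two estimates from (i) and using $(a+b)^\alpha\leq C(a^\alpha+b^\alpha)$ for $\alpha=1/2,3/2$, the right-hand side expands into four terms whose exponents on $X:=\|\partial_y\vartheta/\sqrt J\|_2$ are $0,\tfrac32,\tfrac13,\tfrac{11}{6}$, all strictly less than $2$, paired with $\|J\|_\infty$ exponents $0,0,\tfrac16,\tfrac16$. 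Each term $X^a\|J\|_\infty^b$ is then absorbed via Young's inequality as $\eta X^2+C_\eta\|J\|_\infty^{2b/(2-a)}$; a direct check shows $2b/(2-a)\leq 2$ in all four cases, and the elementary bound $\|J\|_\infty^r\leq 1+\|J\|_\infty^2$ for $r\in[0,2]$ finishes the proof.

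The main technical subtlety lies in the first bound of (i): obtaining the weaker $X^{4/3}$ dependence, rather than $X^2$, is essential, because without this (ii) could not afford an arbitrarily small coefficient on $X^2$, which later energy estimates rely on. This weakening hinges on placing only $\|f\|_\infty^{1/2}$ (rather than $\|f\|_\infty$) against the Dirichlet factor, which is precisely what licenses the Young step with exponent $4$.
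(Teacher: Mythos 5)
Your proposal is correct and follows essentially the same route as the paper: the same choice of a good point via the mean value of $\varrho_0^2\vartheta$ (resp.\ of $\vartheta$ over $\Omega_0$), the same weighted Cauchy--Schwarz with the factorization $\varrho_0^8\vartheta^2=(\varrho_0^2\vartheta)\,\varrho_0^6\vartheta$ to land on the crucial $\|f\|_\infty^{1/2}$ power, and the same interpolation $\|\sqrt{\varrho_0}\vartheta\|_\infty^2\leq\|\varrho_0^2\vartheta\|_\infty^{1/2}\|\vartheta\|_\infty^{3/2}$ followed by Young's inequality for (ii). The exponent bookkeeping in your final absorption step ($2b/(2-a)\leq 2$ in all four cases) is accurate.
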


\begin{proof}
(i) Choose $y_0\in(0,L)$ such that
$$
\varrho_0^2(y_0)\vartheta(y_0,t)\leq\frac2L\int_0^L\varrho_0^2\vartheta d\xi\leq\frac{2\bar\varrho}{L}\|\varrho_0\vartheta\|_1.
$$
By the H\"older and Young inequalities, we deduce
\begin{eqnarray*}
  &&(\varrho_0^2\vartheta)^2(y,t)\leq(\varrho_0\vartheta)^2(y_0,t)
  +2\int_0^L\varrho_0^2\vartheta|\partial_y(\varrho_0^2\vartheta)|d\xi\\
  &\leq&\left(\frac{2\bar\varrho}{L}\right)^2\|\varrho_0\vartheta\|_1^2
  +2\int_0^L(2\varrho_0^2\vartheta\varrho_0\vartheta
  |\varrho_0'|+\varrho_0^2\vartheta\varrho_0^2
  |\partial_y\vartheta|)d\xi\\
  &\leq&\left(\frac{2\bar\varrho}{L}\right)^2\|\varrho_0\vartheta\|_1^2
  +4\|\varrho_0^2\vartheta\|_\infty\|\varrho_0\vartheta\|_1
  \|\varrho_0'\|_\infty+2\bar\varrho^{\frac52}
  \left\|\frac{\partial_y\vartheta}{\sqrt
  J}\right\|_2\|J\|_\infty^{\frac12}\|\varrho_0\vartheta\|_1^{\frac12}
  \|\varrho_0^2\vartheta\|_\infty^{\frac12}\\
  &\leq&\frac12\|\varrho_0^2\vartheta\|_\infty^2
  +\left(\frac{2\bar\varrho}{L}\right)^2\|\varrho_0\vartheta\|_1^2
  +16\|\varrho_0\vartheta\|_1^2
  \|\varrho_0'\|_\infty^2+3\bar\varrho^{\frac{10}{3}}
  \|\varrho_0\vartheta\|_1^{\frac23}
  \left\|\frac{\partial_y\vartheta}{\sqrt
  J}\right\|_2^{\frac43}\|J\|_\infty^{\frac23},
\end{eqnarray*}
for any $y\in(0,L)$, and, thus,
\begin{equation*}
 \|\varrho_0^2\vartheta\|_\infty^2\leq
 2\left(\frac{2\bar\varrho}{L}\right)^2\|\varrho_0\vartheta\|_1^2
  +32\|\varrho_0\vartheta\|_1^2
  \|\varrho_0'\|_\infty^2+6\bar\varrho^{\frac{10}{3}}
  \|\varrho_0\vartheta\|_1^{\frac23}
  \left\|\frac{\partial_y\vartheta}{\sqrt
  J}\right\|_2^{\frac43}\|J\|_\infty^{\frac23},
\end{equation*}
from which, by Proposition \ref{PROPBASIC}, we have
$$
\|\varrho_0^2\vartheta\|_\infty^2\leq\left(\frac{E_0}{c_v}\right)^2
\left(\frac{8\bar\varrho^2}{L^2}+32\|\varrho_0'\|_\infty^2\right)
+6\bar\varrho^{\frac{10}{3}}\left(\frac{E_0}{c_v}\right)^{\frac23}
\left\|\frac{\partial_y\vartheta}
{\sqrt J}\right\|_2^{\frac43}\|J\|_\infty^{\frac23}.
$$

Noticing that
$$
\vartheta(y,t)=\frac{1}{\omega_0}\int_{\Omega_0}\vartheta dz+
\frac{1}{\omega_0}\int_{\Omega_0}\int_z^y\partial_y\vartheta d\xi dz,
$$
we deduce, by the H\"older inequality, that
\begin{eqnarray*}
\|\vartheta\|_\infty&\leq&\frac{1}{\omega_0}\int_{\Omega_0}
\frac{\varrho_0\vartheta}{\varrho_0}dz+\int_0^L
|\partial_y\vartheta|dz\\
&\leq&\frac{2}{\omega_0\bar\varrho}\|\varrho_0\vartheta\|_1+\left(
\int_0^L\left|\frac{\partial_y\vartheta}{\sqrt J}\right|^2
dz\right)^{\frac12}\left(\int_0^LJdz\right)^{\frac12},
\end{eqnarray*}
from which, by Proposition \ref{PROPBASIC}, one obtains
$$
\|\vartheta\|_\infty\leq \sqrt L
\left\|\frac{\partial_y\vartheta}
{\sqrt J}\right\|_2+\frac{2E_0}{c_v\omega_0\bar\varrho}.
$$

(ii) Thanks to (i), one has
\begin{eqnarray*}
\|\varrho_0^2\vartheta\|_\infty^2\leq C\left(1+
\left\|\frac{\partial_y\vartheta}
{\sqrt J}\right\|_2^{\frac43}\|J\|_\infty^{\frac23}\right),\quad
\|\vartheta\|_\infty\leq C\left(
\left\|\frac{\partial_y\vartheta}
{\sqrt J}\right\|_2+1\right),
\end{eqnarray*}
for a positive constant $C$ depending only on $N_1$. Therefore, we have
\begin{eqnarray*}
  \|\sqrt{\varrho_0}\vartheta\|_\infty^2&=&\left\|(\varrho_0^2\vartheta)
  ^{\frac14}\vartheta^{\frac34}\right\|_\infty^2\leq\|\varrho_0^2\vartheta
  \|_\infty^{\frac12}\|\vartheta\|_\infty^{\frac32}\\
  &\leq&C\left(1+
  \left\|\frac{\partial_y\vartheta}
  {\sqrt J}\right\|_2^{\frac43}\|J\|_\infty^{\frac23}\right)^{\frac14}
  \left(\left\|\frac{\partial_y\vartheta}
  {\sqrt J}\right\|_2+1\right)^{\frac32},
\end{eqnarray*}
for a positive constant $C$ depending only on $N_1$, from which, by the
Young inequality, (ii) follows.
\end{proof}

We can now prove the desired a priori
$L^\infty(0,T; L^2)\cap L^2(0,T; H^1)$ estimates on $(v,\vartheta)$ as
in the next proposition.

\begin{proposition}
\label{PROPEstL2}
Given $T\in(0,\infty)$. It holds that
\begin{align*}
\sup_{0\leq t\leq T}\left(\|(\sqrt{\varrho_0}v^2,\sqrt{\varrho_0}\vartheta)\|_2^2 +\|J\|_\infty^2\right)&+\int_0^T\left(\|\vartheta\|_\infty^2+\|(\partial_y
\vartheta,v\partial_y v)\|_2^2\right)dt\\
\leq&\ \ C(1+\|(\sqrt{\varrho_0}
\vartheta_0, \sqrt{\varrho_0}v_0^2)\|_2^2),
\end{align*}
for a positive constant $C$ depending only on $R, c_v, \mu, \kappa,
m_1, N_1$, and $T$, where $m_1$ and $N_1$ are the numbers in
Proposition \ref{PROPEstJ} and Proposition \ref{PROPEstRhoWei}, respectively. 
\end{proposition}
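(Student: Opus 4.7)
The plan is to run three coupled weighted $L^2$ energy estimates---for $\sqrt{\varrho_0}v$, $\sqrt{\varrho_0}v^2$, and $\sqrt{\varrho_0}\vartheta$---simultaneously, and close them by a Gronwall argument. The two ingredients already established do the main work: from Proposition \ref{PROPEstJ}, the lower bound $J\ge (m_1 f_1(T))^{-1}$ (so $1/J$ is uniformly bounded on $[0,L]\times[0,T]$) and the upper bound $\|J\|_\infty\le m_1^2+C\int_0^t\|\varrho_0\vartheta\|_\infty\,d\tau$; from Proposition \ref{PROPEstRhoWei}(ii), the inequality $\|\sqrt{\varrho_0}\vartheta\|_\infty^2\le\eta\|\partial_y\vartheta/\sqrt J\|_2^2+C_\eta(\|J\|_\infty^2+1)$, which trades an $L^\infty$ bound on $\sqrt{\varrho_0}\vartheta$ for a small fraction of the temperature dissipation plus the unknown $\|J\|_\infty^2$.

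First, I would multiply (\ref{EQv}) by $v$ and by $v^3$, and (\ref{EQtheta}) by $\vartheta$. After integration by parts (boundary terms vanish by (\ref{BC})) and writing $D_v=\|\partial_y v/\sqrt J\|_2^2$, $D_{vv}=\|v\,\partial_y v/\sqrt J\|_2^2$, $D_T=\|\partial_y\vartheta/\sqrt J\|_2^2$, one obtains
\begin{align*}
\tfrac{1}{2}\tfrac{d}{dt}\|\sqrt{\varrho_0}v\|_2^2+\mu D_v&=R\!\int_0^L\!\partial_y v\cdot\varrho_0\vartheta/J\,dy,\\
\tfrac{1}{4}\tfrac{d}{dt}\|\sqrt{\varrho_0}v^2\|_2^2+3\mu D_{vv}&=3R\!\int_0^L\!v^2\partial_y v\cdot\varrho_0\vartheta/J\,dy,\\
\tfrac{c_v}{2}\tfrac{d}{dt}\|\sqrt{\varrho_0}\vartheta\|_2^2+\kappa D_T&=-R\!\int_0^L\!\partial_y v\cdot\varrho_0\vartheta^2/J\,dy+\mu\!\int_0^L\!\vartheta(\partial_y v)^2/J\,dy.
\end{align*}
On the first two right-hand sides, Young's inequality absorbs half of the dissipations; the remaining integrals are estimated by $C\|\sqrt{\varrho_0}\vartheta\|_2^2$ and $CE_0\|\sqrt{\varrho_0}\vartheta\|_\infty^2$, respectively, using $\|1/J\|_\infty\le m_1 f_1(T)$ and the basic energy $\int\varrho_0 v^2\le 2E_0$ from Proposition \ref{PROPBASIC}.

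The delicate point is the right-hand side of the $\vartheta$-identity. The first term I would handle by pulling $\sqrt{\varrho_0}\vartheta$ out in $L^\infty$, followed by Cauchy--Schwarz and Young, which produces $\epsilon D_v+C\|\sqrt{\varrho_0}\vartheta\|_\infty^2\|\sqrt{\varrho_0}\vartheta\|_2^2$. For the second term, the only direct bound is $\mu\|\vartheta\|_\infty D_v$, and here I would invoke the first inequality in Proposition \ref{PROPEstRhoWei}(i), $\|\vartheta\|_\infty\le\sqrt L\,D_T^{1/2}+C$, followed by Young to put a small multiple of $D_T$ into the left and leave a $C(D_v^2+D_v)$ remainder. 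This quadratic-in-dissipation remainder is the main obstacle; I would handle it by combining the time-integrated $v\cdot$(\ref{EQv}) estimate, $\int_0^t D_v\,d\tau\le 2E_0/\mu+C\int_0^t\|\sqrt{\varrho_0}\vartheta\|_2^2\,d\tau$, with a bootstrap: first establishing the estimate on a short time interval where $\|\sqrt{\varrho_0}\vartheta\|_2^2$ stays near its initial value (so that $D_v^2$ can be absorbed against $\sup D_v\cdot\int D_v$), and then extending by iteration.

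Finally, summing the three estimates and inserting Proposition \ref{PROPEstRhoWei}(ii) with $\eta$ chosen small enough to absorb the resulting $D_T$ terms into the $\kappa$-dissipation on the left, one arrives at an integral inequality schematically of the form
\[
\Phi(t)+c\!\int_0^t\!\bigl(D_v+D_{vv}+D_T\bigr)d\tau\le C+C\!\int_0^t(1+\|J\|_\infty^2)\Phi(\tau)\,d\tau,
\]
where $\Phi=\|\sqrt{\varrho_0}v\|_2^2+\|\sqrt{\varrho_0}v^2\|_2^2+\|\sqrt{\varrho_0}\vartheta\|_2^2$. Coupling with $\|J\|_\infty\le m_1^2+C\int_0^t\|\varrho_0\vartheta\|_\infty\,d\tau$ from Proposition \ref{PROPEstJ}, and substituting Proposition \ref{PROPEstRhoWei}(ii) once more in time-integral form to bound $\|\sqrt{\varrho_0}\vartheta\|_\infty$ by $D_T^{1/2}+\|J\|_\infty+1$, Cauchy--Schwarz in time and Gronwall's inequality close the loop and yield the asserted bound together with the upper bound on $\|J\|_\infty^2$. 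The time-integral bound on $\|\vartheta\|_\infty^2$ stated in the conclusion then follows directly from the first inequality in Proposition \ref{PROPEstRhoWei}(i) and the $L^2(0,T;H^1)$-type bound on $\vartheta$ just obtained.
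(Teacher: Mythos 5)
Your overall architecture (coupled weighted $L^2$ estimates for $\sqrt{\varrho_0}v$, $\sqrt{\varrho_0}v^2$, $\sqrt{\varrho_0}\vartheta$, closed via Proposition \ref{PROPEstJ}, Proposition \ref{PROPEstRhoWei}(ii) with small $\eta$, and Gronwall) matches the paper, but there is a genuine gap in your treatment of the viscous heating term $\mu\int_0^L\vartheta(\partial_yv)^2/J\,dy$ that arises when you test (\ref{EQtheta}) directly with $\vartheta$. Your bound $\mu\|\vartheta\|_\infty D_v$ followed by $\|\vartheta\|_\infty\le\sqrt L\,D_T^{1/2}+C$ and Young leaves a remainder $C(D_v^2+D_v)$, and the quadratic term $D_v^2$ cannot be closed at this level: absorbing $\int_0^tD_v^2\,d\tau$ against $\sup_\tau D_v\cdot\int_0^tD_v\,d\tau$ requires a pointwise-in-time bound on $\|\partial_yv/\sqrt J\|_2^2$, which is an $H^1$-level quantity that the paper only establishes in Proposition \ref{PROPEstJvH1} --- and that proposition in turn relies on the present one. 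For the approximate solutions of Proposition \ref{PROPGLOBAL}, $\sup_\tau D_v$ is finite, but its size depends on the lower bound of the density, which is exactly the dependence these a priori estimates must avoid; shrinking the time interval does not help, because there is no smallness or continuity mechanism making $\sup_\tau D_v$ controlled by the quantities you are estimating. The proposed bootstrap is therefore circular.

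The paper sidesteps the issue structurally: it never tests the $\vartheta$-equation with $\vartheta$ at this stage. Instead it forms the total energy density $\mathcal E=\frac{v^2}{2}+c_v\vartheta$, for which the viscous heating combines with the work of the viscous stress into the divergence $\mu\partial_y\bigl(\frac1J\partial_y\bigl(\frac{v^2}{2}\bigr)\bigr)$, see (\ref{EQE}). Testing with $\mathcal E$ and integrating by parts then produces only $\int_0^L\frac1J(\kappa\partial_y\vartheta+\mu v\partial_yv)\partial_y\mathcal E\,dy$, i.e.\ bilinear combinations of $\partial_y\vartheta/\sqrt J$ and $v\partial_yv/\sqrt J$; the latter is exactly the dissipation supplied by the $L^4$ estimate of $v$ (testing (\ref{EQv}) with $4v^3$), so everything closes linearly and Gronwall applies. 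As written, the $D_v^2$ term is fatal to your argument; the remaining pieces (the use of Proposition \ref{PROPEstRhoWei}(ii) to absorb $\|\sqrt{\varrho_0}\vartheta\|_\infty^2$, the coupling with the upper bound of $J$ from Proposition \ref{PROPEstJ}, and the derivation of $\int_0^T\|\vartheta\|_\infty^2\,dt$ from Proposition \ref{PROPEstRhoWei}(i)) are consistent with the paper.
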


\begin{proof}
Denote $\mathcal E:=\frac{v^2}{2}+c_v\vartheta$. Then one can derive
from (\ref{EQv}) and (\ref{EQtheta}) that
\begin{equation}
  \label{EQE}
  \varrho_0\partial_t\mathcal E+\partial_y(v\pi)-\kappa \partial_y\left(
  \frac{\partial_y\vartheta}{J}\right)=\mu\partial_y\left(\frac1J
  \partial_y\left(\frac{v^2}{2}\right)\right).
\end{equation}
Multiplying (\ref{EQE}) by $\mathcal E$ and integrating the resultant
over $(0,L)$, one get from integration by parts that
\begin{equation}
  \frac12\frac{d}{dt}\int_0^L\varrho_0\mathcal E^2 dy+\int_0^L\frac1J(
  \kappa\partial_y\vartheta+\mu v\partial_y v)\partial_y\mathcal E dy=
  \int_0^Lv\pi\partial_y\mathcal E dy. \label{L2-1}
\end{equation}
By the Young inequality, we have
\begin{eqnarray*}
  \int_0^L\frac1J(
  \kappa\partial_y\vartheta+\mu v\partial_y v)\partial_y\mathcal E dy
  &=&\int_0^L\frac1J(
  \kappa\partial_y\vartheta+\mu v\partial_y v)
  (v\partial_y v+ c_v\partial_y\vartheta) dy\\
  &\geq&\frac{3\kappa
  c_v}{4}\int_0^L\left|\frac{\partial_y\vartheta}{\sqrt J}
  \right|^2dy-C\int_0^L\left|\frac{v\partial_yv}{\sqrt J}\right|^2dy
\end{eqnarray*}
and
\begin{eqnarray*}
  \int_0^Lv\pi\partial_y\mathcal Edy&=&R\int_0^Lv\frac{\varrho_0}{J}
  \vartheta(v\partial_yv+c_v\partial_y\vartheta) dy\\
  &\leq&\frac{\kappa c_v}{4}\int_0^L\left|\frac{\partial_y\vartheta}
  {\sqrt{J}}\right|^2dy +C\int_0^L\frac1J[\varrho_0^2v^2\vartheta^2+(v
  \partial_yv)^2]dy,
\end{eqnarray*}
for a positive constant $C$ depending only on $R, c_v, \mu$, and
$\kappa$. Substituting the above two inequalities into (\ref{L2-1})
and applying Proposition \ref{PROPBASIC} and Proposition \ref{PROPEstJ},
one obtains
\begin{eqnarray*}
  \frac{d}{dt}\|\sqrt{\varrho_0}\mathcal E\|_2^2+\kappa c_v\left\|
  \frac{\partial_y\vartheta}{\sqrt J}\right\|_2^2 \leq
  C\left(\left\|\frac{v\partial_yv}{\sqrt J}\right\|_2^2+\int_0^L\frac{\varrho_0v^2}{J}\varrho_0\vartheta^2
  dy\right)\\
  \leq C\left(\left\|\frac{v\partial_yv}{\sqrt J}\right\|_2^2+
  \|\sqrt{\varrho_0}v\|_2^2\|\sqrt{\varrho_0}\vartheta\|_\infty^2
  m_1f_1(t)\right)\\
  \leq C\left(\left\|\frac{v\partial_yv}{\sqrt J}\right\|_2^2+
  E_0m_1f_1(t)\|\sqrt{\varrho_0}\vartheta\|_\infty^2
  \right),
\end{eqnarray*}
for a positive constant $C$ depending only on $R, c_v, \mu$, and
$\kappa$, and, thus,
\begin{equation}
\label{L2-EINQE}
  \frac{d}{dt}\|\sqrt{\varrho_0}\mathcal E\|_2^2+\kappa c_v\left\|
  \frac{\partial_y\vartheta}{\sqrt J}\right\|_2^2
  \leq A_1\left(\left\|\frac{v\partial_yv}{\sqrt J}\right\|_2^2+
  E_0m_1f_1(t)\|\sqrt{\varrho_0}\vartheta\|_\infty^2
  \right),
\end{equation}
for a positive constant $A_1$ depending only on $R, c_v, \mu$, and
$\kappa$.

Multiplying (\ref{EQv}) by $4v^3$ and integrating the resultant over
$(0,L)$, one gets from integration by parts and the Young inequality
that
\begin{eqnarray*}
  &&\frac{d}{dt}\int_0^L\varrho_0 v^4
  dy+12\mu\int_0^L\left|\frac{v\partial_yv}{\sqrt J}\right|^2dy\\
  &=&12\int_0^L\pi v^2\partial_yvdy=12R\int_0^L\frac{\varrho_0}{J}
  \vartheta v^2\partial_yvdy \\
  &\leq&6\mu\int_0^L\left|\frac{v\partial_yv}{\sqrt J}\right|^2 dy +
  \frac{6R^2}{\mu}\int_0^L\frac{\varrho_0v^2}{J}\varrho_0\vartheta^2
  dy,
\end{eqnarray*}
from which, by Proposition \ref{PROPEstJ}, the H\"older inequality, and
Proposition \ref{PROPBASIC}, one obtains
\begin{eqnarray*}
  &&\frac{d}{dt}\int_0^L\varrho_0 v^4
  dy+6\mu\int_0^L\left|\frac{v\partial_yv}{\sqrt J}\right|^2dy
  \leq\frac{6R^2}{\mu}\int_0^L\frac{\varrho_0v^2}{J}\varrho_0\vartheta^2
  dy\\
  &\leq&\frac{6R^2}{\mu}m_1f_1(t)\|\sqrt{\varrho_0}v\|_2^2\|\sqrt{
  \varrho_0}\vartheta\|_\infty^2
  \leq\frac{12R^2m_1E_0}{\mu}f_1(t)\|\sqrt{\varrho_0}
  \vartheta\|_\infty^2,
\end{eqnarray*}
that is,
\begin{equation}
  \label{L2-EINQv}
  \frac{d}{dt}\|\sqrt{\varrho_0} v^2\|_2^2
  +6\mu\left\|\frac{v\partial_yv}{\sqrt J}\right\|_2^2
  \leq\frac{12R^2}{\mu}E_0 m_1 f_1(t)\|\sqrt{\varrho_0}
  \vartheta\|_\infty^2.
\end{equation}

Multiplying (\ref{L2-EINQv}) by $\frac{A_1}{\mu}$, adding the resultant
to (\ref{L2-EINQE}), and noticing that $f_1(t)$ is nondecreasing in $t$,
one gets
\begin{eqnarray}
  &&\frac{d}{dt}\left(\|\sqrt{\varrho_0}\mathcal E\|_2^2
  +\frac{A_1}{\mu}\|\sqrt{\varrho_0}v^2\|_2^2\right)
  +5A_1\left\|\frac{v\partial_yv}{\sqrt J}\right\|_2^2+\kappa c_v
  \left\|\frac{\partial_y\vartheta}{\sqrt{J}}\right\|_2^2 \nonumber\\
  &\leq&\left(1+\frac{12R^2}{\mu^2}\right)A_1E_0m_1f_1(t)\|\sqrt{\varrho_0}
  \vartheta\|_\infty^2
  \leq  A_2E_0m_1f_1(T)\|\sqrt{\varrho_0}
  \vartheta\|_\infty^2(t), \label{L2-EINQEv}
\end{eqnarray}
for any $t\in(0,T)$, where $ A_2=\left(1+\frac{12R^2}
{\mu^2}\right)A_1$.

By Proposition \ref{PROPEstJ} and (ii) of
Proposition \ref{PROPEstRhoWei}, we have
\begin{equation*}
    \|\sqrt{\varrho_0}\vartheta\|_\infty^2(t)\leq\eta\left\|
    \frac{\partial_y\vartheta}{\sqrt
     J}\right\|_2^2(t)+C_\eta\left(1+\int_0^t\|\sqrt{\varrho_0}
     \vartheta\|_\infty^2 d\tau\right),
\end{equation*}
for any $\eta\in(0,\infty)$, and any $t\in(0,T)$,
where $C_\eta$ is a positive constant
depending only on $R, c_v, \mu, \kappa,$ $m_1, N_1, T$, and $\eta$.
Multiplying both sides of the above inequality by $2
A_2E_0m_1f_1(T)$, choosing $\eta=\frac{\kappa c_v}{4 A_2E_0m_1f_1(T)}$, and summing the resultant with (\ref{L2-EINQEv}),
one obtains
\begin{align*}
\frac{d}{dt}&\left(\|\sqrt{\varrho_0}\mathcal E\|_2^2
  +\frac{A_1}{\mu}\|\sqrt{\varrho_0}v^2\|_2^2
  +A_2E_0m_1f_1(T)\int_0^t\|\sqrt{\varrho_0}\vartheta\|_\infty^2
  d\tau\right)\\
  &+5A_1\left\|\frac{v\partial_yv}{\sqrt J}\right\|_2^2
  +\frac{\kappa c_v}{2}
  \left\|\frac{\partial_y\vartheta}{\sqrt{J}}\right\|_2^2
  \leq CA_2E_0m_1f_1(T)
  \left(1+ \int_0^t\|\sqrt{\varrho_0}
     \vartheta\|_\infty^2 d\tau\right),
\end{align*}
for any $t\in(0,T)$, where $C$ is a positive constant depending only
on $R, c_v, \mu, \kappa, m_1, N_1$, and $T$. Applying the Gronwall inequality to the above inequality, one gets
\begin{align}
\sup_{0\leq t\leq T} \|(\sqrt{\varrho_0}v^2,\sqrt{\varrho_0}\vartheta)\|_2^2& +\int_0^T (\|\sqrt{\varrho_0}\vartheta\|_\infty^2+\|(\partial_y
\vartheta,v\partial_y v)\|_2^2) dt\nonumber \\
\leq&\ \  C(1+\|(\sqrt{\varrho_0}
\vartheta_0, \sqrt{\varrho_0}v_0^2)\|_2^2),\label{L2-EstEv}
\end{align}
for a positive constant $C$ depending only on $R, c_v, \mu, \kappa,
m_1, N_1$, and $T$. The desired estimate
$$
\sup_{0\leq t\leq T}\|J\|_\infty^2+\int_0^T\|\vartheta\|_\infty^2 dt\leq C(1+\|(\sqrt{\varrho_0}
\vartheta_0, \sqrt{\varrho_0}v_0^2)\|_2^2)
$$
follows from (\ref{L2-EstEv}),
by applying Proposition \ref{PROPEstJ} and
(i) of Proposition \ref{PROPEstRhoWei}.
\end{proof}

As a corollary of Proposition \ref{PROPEstJ} and
Proposition \ref{PROPEstL2}, we have the following:

\begin{corollary}
  \label{COREstL2}
Given $T\in(0,\infty)$. It holds that
$$
0<\underline{\mathcal J}
\leq J(y,t)\leq  C(1+\|(\sqrt{\varrho_0}v_0, \sqrt{\varrho_0}v_0^2,\sqrt{\varrho_0}
\vartheta_0)\|_2^2),
$$
for all $(y,t)\in(0,L)\times(0,T),$ and
\begin{eqnarray*}
  \sup_{0\leq t\leq T}\|(\sqrt{\varrho_0}v,\sqrt{\varrho_0}\vartheta)\|_2^2
  +\int_0^T\left(\|\vartheta\|_\infty^2+\|(\partial_y
  \vartheta,\partial_y v)\|_2^2\right)dt\\
  \leq C(1+\|(\sqrt{\varrho_0}v_0, \sqrt{\varrho_0}v_0^2,\sqrt{\varrho_0}
  \vartheta_0)\|_2^2),
\end{eqnarray*}
for positive constants $\underline{\mathcal J}$ and
$C$ depending only on $R, c_v, \mu, \kappa,
m_1, N_1$, and $T$, where $m_1$ and $N_1$ are the numbers in
Proposition \ref{PROPEstJ} and Proposition \ref{PROPEstRhoWei}, respectively.
\end{corollary}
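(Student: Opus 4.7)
The plan is to assemble the conclusion from the two preceding propositions, with a small amount of additional work needed only to convert the weighted quantity $v\partial_yv$ supplied by Proposition \ref{PROPEstL2} into the unweighted $\partial_yv$.

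First, the bounds on $J$. Proposition \ref{PROPEstJ} already gives $J(y,t)\ge (m_1f_1(t))^{-1}$; since $f_1(t)$ is nondecreasing in $t$, I would simply set $\underline{\mathcal J}:=(m_1f_1(T))^{-1}>0$, which depends only on the listed constants. The upper bound on $J$ is read off directly from the term $\sup_{[0,T]}\|J\|_\infty^2$ in Proposition \ref{PROPEstL2}, after using $\sqrt{a}\le 1+a$.

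Next, most of the $L^\infty_t(L^2_y)$ and $L^2_t$ pieces are already in hand. The basic energy identity of Proposition \ref{PROPBASIC} gives $\sup_{[0,T]}\|\sqrt{\varrho_0}v\|_2^2\le 2E_0$, and $E_0$ is controlled by the initial-data norms appearing on the right-hand side of the corollary (using $\|\sqrt{\varrho_0}\vartheta_0\|_1\le\sqrt L\,\|\sqrt{\varrho_0}\vartheta_0\|_2$). The remaining quantities $\sup_{[0,T]}\|\sqrt{\varrho_0}\vartheta\|_2^2$, $\int_0^T\|\vartheta\|_\infty^2\,dt$, and $\int_0^T\|\partial_y\vartheta\|_2^2\,dt$ are immediate consequences of Proposition \ref{PROPEstL2}.

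The only genuinely new step is the $L^2_tL^2_y$ estimate on $\partial_yv$. I would test \eqref{EQv} against $v$, integrate by parts over $(0,L)$, and apply Young's inequality to the pressure work term to obtain
\begin{equation*}
\tfrac{1}{2}\tfrac{d}{dt}\|\sqrt{\varrho_0}v\|_2^2+\mu\int_0^L\frac{(\partial_yv)^2}{J}\,dy=\int_0^L\pi\,\partial_yv\,dy\le\frac{\mu}{2}\int_0^L\frac{(\partial_yv)^2}{J}\,dy+\frac{R^2}{2\mu}\int_0^L\frac{\varrho_0^2\vartheta^2}{J}\,dy.
\end{equation*}
Using $\varrho_0\le\bar\varrho$ and the lower bound $J\ge\underline{\mathcal J}$ from the first step, the last integral is bounded by $C\,\|\sqrt{\varrho_0}\vartheta\|_2^2$, and therefore uniformly in $t$ by the previous step. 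Integrating in time yields $\int_0^T\!\!\int_0^L(\partial_yv)^2/J\,dy\,dt\le C$; multiplying through by the upper bound on $J$ from Proposition \ref{PROPEstL2} removes the weight and gives the desired bound on $\int_0^T\|\partial_yv\|_2^2\,dt$. I do not anticipate any real obstacle beyond tracking the dependence of constants to confirm that they fit the advertised list $R,c_v,\mu,\kappa,m_1,N_1,T$.
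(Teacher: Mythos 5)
Your proposal is correct and follows essentially the same route as the paper: all claims except the bound on $\int_0^T\|\partial_yv\|_2^2\,dt$ are read off from Propositions \ref{PROPEstJ} and \ref{PROPEstL2}, and the remaining one is obtained by testing (\ref{EQv}) with $v$ and absorbing the pressure term via Young's inequality, then removing the weight $1/J$ using the bounds on $J$. The only (immaterial) difference is that you control $\int_0^L\varrho_0^2\vartheta^2/J\,dy$ by $\sup_t\|\sqrt{\varrho_0}\vartheta\|_2^2$ using the lower bound of $J$, whereas the paper bounds it by $m_1f_1(t)\bar\varrho\,\|\sqrt{\varrho_0}\vartheta\|_\infty^2$ and uses the time-integrated bound from Proposition \ref{PROPEstL2}.
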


\begin{proof}
  All estimates expect that for $\int_0^T\|\partial_yv\|_2^2dt$
  follow directly from Proposition \ref{PROPEstJ} and Proposition
  \ref{PROPEstL2}.
  Multiplying (\ref{EQv}) by $v$, integrating the resultant over $(0,L)$, one gets from integration by parts that
  \begin{eqnarray*}
    &&\frac12\frac{d}{dt}\|\sqrt{\varrho_0}v\|_2^2+\mu\left\|
    \frac{\partial_y v}{\sqrt J}\right\|_2^2
    =\int_0^L\pi\partial_yvdy\\
    &=&R\int_0^L\frac{\varrho_0}{J}\vartheta\partial_yvdy
    \leq\frac\mu2
    \left\|\frac{\partial_y v}{\sqrt J}\right\|_2^2+\frac{R}{2\mu}\int_0^L\frac{\varrho_0^2}{J}\vartheta^2 dy
  \end{eqnarray*}
  and, thus, by Proposition \ref{PROPEstJ},
  $$
  \frac{d}{dt}\|\sqrt{\varrho_0}v\|_2^2+\mu\left\|\frac{\partial_y v}{\sqrt J}\right\|_2^2
  \leq\frac{R}{\mu}m_1f_1(t)\bar\varrho \|\sqrt{\varrho_0}\vartheta\|_\infty^2,
  $$
  from which, by Proposition \ref{PROPEstL2}, the conclusion follows.
\end{proof}

\subsection{A priori $H^1$ estimates} This section is devoted to the a
prior $H^1$ type estimates on $(J, v,\vartheta)$.
Precisely, we will carry out the
a priori $L^\infty(0,T; H^1)\cap L^2(0,T; H^2)$ estimate on $v$
and the $L^\infty(0,T; H^1)$ estimate on $J$; however, due to
the presence of
the term $\frac{\mu}{J}(\partial_yv)^2$ on the right-hand side of the
equation for $\vartheta$, (\ref{EQtheta}), one can not get the desired
a priori $H^1$ estimate of $\vartheta$ independent of the lower bound of
the density, without appealing to the higher than $H^1$ energy
estimates.

Define the effective viscous flux $G$ as
$$
G:=\mu\frac{\partial_yv}{J}-\pi=\mu\frac{\partial_yv}{J}-
R\frac{\varrho_0}{J}\vartheta.
$$
Then, one can derive from (\ref{EQJ})--(\ref{EQtheta}) that
\begin{equation}
  \label{EQG}
  \partial_tG-\frac\mu J\partial_y\left(\frac{\partial_yG}{\varrho_0}\right)
  =-\frac{\kappa(\frac{R}{c_v}-1)}{J}\partial_y\left(
  \frac{\partial_y\vartheta}{J}
  \right)-\frac{R}{c_v}\frac{\partial_yv}{J}G.
\end{equation}
Moreover, by equation (\ref{EQv}), one has  $\partial_yG=\varrho_0\partial_tv$, from which, recalling the boundary
condition (\ref{BC}), we have
$$
\partial_yG(0,t)=\partial_yG(1,t)=0,\quad t\in(0,\infty).
$$

We have the a priori $L^2$ estimates on $G$ sated in the following:

\begin{proposition}
  \label{PROPEstGL2}
Given $T\in(0,\infty)$. It holds that
\begin{equation*}
  \sup_{0\leq t\leq T}\|G\|_2^2+\int_0^T\left\|\frac{\partial_y G}{\sqrt{\varrho_0}}\right\|_2^2dt\leq C,
\end{equation*}
for a positive constant $C$ depending only on $R, c_v, \mu, \kappa, m_1, N_1$, $N_2$, and $T$, where
$$
N_2:=\|\sqrt{\varrho_0}v_0^2\|_2+
\|\sqrt{\varrho_0}\vartheta_0\|_2+\|v_0'\|_2,
$$
and $m_1$ and $N_1$ are the numbers in
Proposition \ref{PROPEstJ} and Proposition \ref{PROPEstRhoWei}, respectively.
\end{proposition}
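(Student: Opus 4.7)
The strategy is a direct energy estimate on (\ref{EQG}). Because the diffusion operator has the form $\frac{\mu}{J}\partial_y(\partial_yG/\varrho_0)$, the natural multiplier is $JG$: this produces the clean time derivative $\frac12\frac{d}{dt}\|\sqrt JG\|_2^2$ after using $\partial_tJ=\partial_yv$ (which leaves behind an extra $-\frac12\int\partial_yv\cdot G^2$), and, via integration by parts using the homogeneous Neumann-type condition $\partial_yG|_{y=0,L}=0$ (a consequence of $\partial_yG=\varrho_0\partial_tv$ together with $v|_{y=0,L}=0$), the desired dissipation $\mu\|\partial_yG/\sqrt{\varrho_0}\|_2^2$. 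The $\vartheta$-term on the right of (\ref{EQG}) likewise integrates by parts thanks to $\partial_y\vartheta|_{y=0,L}=0$, moving a derivative onto $G$.

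After collecting the terms, the identity to be closed reads
\begin{equation*}
\frac12\frac{d}{dt}\|\sqrt JG\|_2^2+\mu\left\|\frac{\partial_yG}{\sqrt{\varrho_0}}\right\|_2^2=\left(\frac12-\frac{R}{c_v}\right)\int_0^L\partial_yv\,G^2\,dy+\kappa\left(\frac{R}{c_v}-1\right)\int_0^L\frac{\partial_yG\,\partial_y\vartheta}{J}\,dy.
\end{equation*}
The $\vartheta$-term is easy: by Young, together with the uniform bounds $\underline{\mathcal J}\le J$ and $\varrho_0\le\bar\varrho$ from Corollary \ref{COREstL2}, it is controlled by $\eta\|\partial_yG/\sqrt{\varrho_0}\|_2^2+C_\eta\|\partial_y\vartheta\|_2^2$, and $\int_0^T\|\partial_y\vartheta\|_2^2\,dt$ is finite by the same corollary.

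The delicate piece is the cubic $\int\partial_yv\,G^2\,dy$. A naive H\"older estimate would require either $\|G\|_\infty$ or a stronger norm of $\partial_yv$ than we currently have. My plan is to integrate by parts using $v(0,t)=v(L,t)=0$, rewriting the term as $-2\int v\,G\,\partial_yG\,dy$, and then applying Young after the pointwise bound $\|\partial_yG\|_2\le\sqrt{\bar\varrho}\,\|\partial_yG/\sqrt{\varrho_0}\|_2$:
\begin{equation*}
\left|\int_0^L\partial_yv\,G^2\,dy\right|\le\eta\left\|\frac{\partial_yG}{\sqrt{\varrho_0}}\right\|_2^2+C_\eta\,\|v\|_\infty^2\,\|G\|_2^2.
\end{equation*}
Since $v$ vanishes at the boundary, $\|v\|_\infty\le\sqrt L\,\|\partial_yv\|_2$, so Corollary \ref{COREstL2} yields $\int_0^T\|v\|_\infty^2\,dt\le L\int_0^T\|\partial_yv\|_2^2\,dt<\infty$. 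Absorbing the $\eta$-terms into the dissipation on the left, together with $\|G\|_2^2\le\underline{\mathcal J}^{-1}\|\sqrt JG\|_2^2$, puts the inequality into Gronwall form with an $L^1_t$ coefficient; the initial bound is immediate from $J(\cdot,0)=1$, giving $\|G(\cdot,0)\|_2\le\mu\|v_0'\|_2+R\sqrt{\bar\varrho}\,\|\sqrt{\varrho_0}\vartheta_0\|_2\lesssim N_2$.

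The principal obstacle is exactly this cubic interaction: at this stage no $L^\infty(H^1)$-bound on $G$ is available, so one cannot pull $\|G\|_\infty$ outside the integral and hope to control it by time-integrable data. Shifting the derivative onto $G$ and pulling $v$ outside instead, which lies in $L^2(0,T;L^\infty)$ thanks to its Dirichlet trace, is what closes the estimate; all other pieces reduce directly to the $L^2$-estimates of Corollary \ref{COREstL2}.
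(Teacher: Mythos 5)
Your proposal is correct and follows essentially the same route as the paper: multiply (\ref{EQG}) by $JG$, use $\partial_tJ=\partial_yv$ and the boundary conditions $\partial_yG|_{y=0,L}=\partial_y\vartheta|_{y=0,L}=0$ to arrive at exactly the identity you display, and close by Gronwall with the $L^1_t$ coefficient $1+\|\partial_yv\|_2^2$ supplied by Corollary \ref{COREstL2}. The only (harmless) divergence is in the cubic term: the paper keeps $\int_0^L\partial_yv\,G^2\,dy$ as is and bounds $\|G\|_\infty\lesssim\|G\|_2^{1/2}(\|G\|_2+\|\partial_yG\|_2)^{1/2}$ by Gagliardo--Nirenberg before absorbing $\|\partial_yG\|_2$ into the dissipation, whereas you integrate by parts onto $G$ and use $\|v\|_\infty\le\sqrt L\,\|\partial_yv\|_2$; both land on the same time-integrable Gronwall coefficient.
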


\begin{proof}
Multiplying equation (\ref{EQG}) by $JG$,
integrating the resultant over
$(0,L)$, and recalling $\partial_yG|_{y=0,L}=0$, one gets from integration by parts that
\begin{equation*}
  \int_0^L\partial_tGJGdy+\mu\int_0^L\left|\frac{\partial_yG}{\sqrt J}\right|^2dy=\kappa\left(\frac{R}{c_v}-1\right)
  \int_0^L\frac{\partial_y\vartheta
  \partial_yG}{J}dy-\frac{R}{c_v}\int_0^L\partial_yvG^2dy.
\end{equation*}
Using (\ref{EQJ}), one has
\begin{eqnarray*}
  \int_0^L\partial_tGJGdy&=&\frac12\frac{d}{dt}\int_0^LJG^2dy
  -\frac12\int_0^L
  \partial_tJG^2dy\\
  &=&\frac12\frac{d}{dt}\int_0^LJG^2dy-\frac12\int_0^L\partial_yvG^2dy.
\end{eqnarray*}
Therefore, it follows from the H\"older, Young, and Gagliardo-Nirenberg  inequalities and Corollary \ref{COREstL2} that
\begin{eqnarray*}
  &&\frac12\frac{d}{dt}\int_0^LJG^2dy+ \mu\int_0^L\left|\frac{\partial_yG}{\sqrt J}\right|^2dy\\
  &=&\kappa\left(\frac{R}{c_v}-1\right)\int_0^L\frac{\partial_y\vartheta
  \partial_yG}{J}dy+\left(\frac12-\frac{R}{c_v}
  \right)\int_0^L\partial_yvG^2dy\\
  &\leq&\kappa\left|\frac{R}{c_v}-1\right|\sqrt{\bar\varrho}
  \left\|\frac{\partial_yG}{\sqrt{\varrho_0}}\right\|_2\left\|
  \frac{\partial_y\vartheta}{\sqrt J}\right\|_2+\left(\frac12+\frac{R}{c_v}\right)\|\partial_yv\|_2\|G\|_2\|G\|_\infty\\
  &\leq&C\left(\left\|\frac{\partial_yG}{\sqrt{\varrho_0}}\right\|_2
  \|\partial_y\vartheta\|_2+\|\partial_yv\|_2\|G\|_2^{\frac32}(\|G\|_2
  +\|\partial_yG\|_2)^{\frac12}\right)\\
  &\leq&\frac\mu2\left\|\frac{\partial_yG}{\sqrt{\varrho_0}}\right\|_2^2
  +C\Big[(1+\|\partial_yv\|_2^2)\|G\|_2^2+\|\partial_y\vartheta\|_2^2\Big],
\end{eqnarray*}
that is
\begin{equation}
\label{EINQGL2}
  \frac{d}{dt}\|\sqrt JG\|_2^2+\mu\left\|\frac{\partial_yG}{\sqrt J}
  \right\|_2^2\leq C\Big[(1+\|\partial_yv\|_2^2)\|G\|_2^2+\|\partial_y\vartheta\|_2^2\Big],
\end{equation}
for any $t\in(0,T)$, where $C$ is a positive constant depending only on
$R, c_v, \mu, \kappa, m_1, N_1$, and $T$. Applying the Gronwall
inequality to (\ref{EINQGL2}) and using Corollary \ref{COREstL2}, the conclusion follows.
\end{proof}

Based on Proposition \ref{PROPEstGL2}, we can obtain the
desired $H^1$ type estimates on $J$ and $v$ as stated in the next proposition.

\begin{proposition}
  \label{PROPEstJvH1}
Given $T\in(0,\infty)$. It holds that
\begin{equation*}
  \sup_{0\leq t\leq T}(\|\partial_yJ\|_2^2+\|\partial_yv\|_2^2)
  +\int_0^T(\|\sqrt{\varrho_0}\partial_tv\|_2^2+\|\partial_y^2v\|_2^2)dt
  \leq C,
\end{equation*}
for a positive constant $C$ depending only on
$R, c_v, \mu, \kappa, m_1, N_1, N_2$, and $T$, where $m_1$, $N_1$ and $N_2$ are the numbers in
Propositions \ref{PROPEstJ}, \ref{PROPEstRhoWei}, and \ref{PROPEstGL2}, respectively.
\end{proposition}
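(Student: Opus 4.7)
The plan is to exploit the effective viscous flux $G=\mu\partial_y v/J-\pi$ via the identity $\varrho_0\partial_t v=\partial_y G$ (immediate from (\ref{EQv})) and the algebraic relation $\partial_y v=\frac{J}{\mu}G+\frac{R}{\mu}\varrho_0\vartheta$ (the definition of $G$ rearranged). The advantage of this approach over a direct energy estimate using $\partial_t v$ as multiplier is that it avoids producing a $\partial_t\vartheta$ term, for which no a priori bound is available at this stage.

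First, from $\varrho_0\partial_t v=\partial_y G$ one has $\|\sqrt{\varrho_0}\partial_t v\|_2^2=\|\partial_y G/\sqrt{\varrho_0}\|_2^2$, so the $L^2(0,T;L^2)$ bound on $\sqrt{\varrho_0}\partial_t v$ is immediate from Proposition \ref{PROPEstGL2}. The $L^\infty(0,T;L^2)$ bound on $\partial_y v$ follows from the algebraic relation using the $L^\infty$ bound on $J$ (Corollary \ref{COREstL2}), the $L^\infty(0,T;L^2)$ bound on $G$ (Proposition \ref{PROPEstGL2}), and $\|\varrho_0\vartheta\|_2\leq\sqrt{\bar\varrho}\,\|\sqrt{\varrho_0}\vartheta\|_2$ (Corollary \ref{COREstL2}).

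The main work is the joint control of $\sup_t\|\partial_y J\|_2$ and $\int_0^T\|\partial_y^2v\|_2^2\,dt$. Differentiating the algebraic relation once more in $y$ gives
\[
  \partial_y^2 v=\frac{G}{\mu}\partial_y J+\frac{J}{\mu}\partial_y G+\frac{R}{\mu}\varrho_0'\vartheta+\frac{R}{\mu}\varrho_0\partial_y\vartheta.
\]
The dangerous term is $(G/\mu)\partial_y J$. Gagliardo--Nirenberg combined with the $L^\infty(0,T;L^2)$ bound on $G$ gives $\|G\|_\infty^2\leq C(1+\|\partial_y G\|_2)$, and Proposition \ref{PROPEstRhoWei}(i) together with the positive lower bound on $J$ from Corollary \ref{COREstL2} yields $\|\vartheta\|_2\leq C(1+\|\partial_y\vartheta\|_2)$. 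Inserting these bounds gives
\[
  \|\partial_y^2 v\|_2^2\leq C\bigl[(1+\|\partial_y G\|_2)\|\partial_y J\|_2^2+\|\partial_y G\|_2^2+\|\partial_y\vartheta\|_2^2+1\bigr].
\]

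To close the estimate, differentiate $\partial_t J=\partial_y v$ in $y$ and test against $\partial_y J$, which yields $\frac{d}{dt}\|\partial_y J\|_2^2\leq\|\partial_y J\|_2^2+\|\partial_y^2 v\|_2^2$. Combining with the display above produces
\[
  \frac{d}{dt}\|\partial_y J\|_2^2\leq C(1+\|\partial_y G\|_2)\|\partial_y J\|_2^2+Cf(t),\qquad f(t):=\|\partial_y G\|_2^2+\|\partial_y\vartheta\|_2^2+1.
\]
Since $\|\partial_y G\|_2\leq\sqrt{\bar\varrho}\,\|\partial_y G/\sqrt{\varrho_0}\|_2$ lies in $L^2(0,T)\subset L^1(0,T)$ by Proposition \ref{PROPEstGL2}, and $\|\partial_y\vartheta\|_2\in L^2(0,T)$ by Corollary \ref{COREstL2}, the Gronwall coefficient is $L^1(0,T)$ and the forcing $f$ is $L^1(0,T)$, yielding $\sup_t\|\partial_y J\|_2^2\leq C$. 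Reinserting this into the bound on $\|\partial_y^2 v\|_2^2$ and integrating in time closes all estimates. The main obstacle, as anticipated, is precisely this coupling between $\|\partial_y J\|_2$ and $\|\partial_y^2 v\|_2$; the key observation is that the Gagliardo--Nirenberg control $\|G\|_\infty^2\lesssim 1+\|\partial_y G\|_2$ keeps the nonlinear coefficient of $\|\partial_y J\|_2^2$ only $L^1$ (rather than $L^\infty$) in time, which is all that Gronwall requires.
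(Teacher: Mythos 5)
Your proposal is correct and follows essentially the same route as the paper: both reduce $\sqrt{\varrho_0}\partial_tv$ and $\partial_yv$ to the bounds on $G$ via $\varrho_0\partial_tv=\partial_yG$ and $\partial_yv=\frac1\mu(JG+R\varrho_0\vartheta)$, and both close the $\partial_yJ$/$\partial_y^2v$ coupling by a Gronwall argument on $\|\partial_yJ\|_2^2$ whose coefficient involving $G$ is only $L^1$ in time. The sole (immaterial) difference is that the paper controls that coefficient as $\|G\|_\infty^2\lesssim\|G\|_{H^1}^2\in L^1(0,T)$ via Sobolev embedding, whereas you use the Gagliardo--Nirenberg form $\|G\|_\infty^2\lesssim 1+\|\partial_yG\|_2$.
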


\begin{proof}
The estimate
$\sup_{0\leq t\leq T}\|\partial_yv\|_2^2
  +\int_0^T\|\sqrt{\varrho_0}\partial_tv\|_2^2dt\leq C$ is
straightforward from Corollary \ref{COREstL2} and Proposition \ref{PROPEstGL2}, by
the definition of $G$ and noticing that
$\varrho_0\partial_tv=\partial_yG$.
Note that, by the Sobolev embedding inequality, it follows from Proposition \ref{PROPEstGL2} that
\begin{equation}
  \label{EstGInf}
  \int_0^T\|G\|_\infty^2dt\leq C\int_0^L\|G\|_{H^1}^2dt\leq C,
\end{equation}
for a positive constant $C$ depending only on $R, c_v, \mu, \kappa, m_1, N_1, N_2$, and $T$.

  Rewrite (\ref{EQJ}) in terms of $G$ as
  $$
  \partial_tJ=\frac1\mu(JG+R\varrho_0\vartheta).
  $$
  Differentiating the above equations in $y$, multiplying the resultant
  by $\partial_yJ$, and integrating over $(0,L)$, it follows from the
  H\"older and Young inequalities that
  \begin{eqnarray*}
    &&\frac12\frac{d}{dt}\|\partial_yJ\|_2^2=\frac1\mu\int_0^L
    [G|\partial_y
    J|^2+\partial_yGJ\partial_yJ+R(\varrho_0'\vartheta+\varrho_0
    \partial_y
    \vartheta)\partial_yJ]dy \\
    &\leq&\frac1\mu\left[\|G\|_\infty\|\partial_yJ\|_2^2+\|J\|_\infty
    \|\partial_yG\|_2\|\partial_yJ\|_2+R(\|\vartheta\|_\infty
    \|\varrho_0'\|_2+\bar\varrho\|\partial_y\vartheta\|_2)
    \|\partial_yJ\|_2
    \right]\\
    &\leq&C(\|G\|_\infty^2+1)\|\partial_yJ\|_2^2+C(\|J\|_\infty^2
    \|\partial_y G\|_2^2+\|\varrho_0'\|_2^2\|\vartheta\|_\infty^2+\bar\varrho^2 \|\partial_y\vartheta\|_2^2),
  \end{eqnarray*}
  for a positive constant $C$ depending only on $R$ and $\mu$.
  Applying the Gronwall inequality to the above inequality, it follows
  from (\ref{EstGInf}), Corollary \ref{COREstL2}, and Proposition
  \ref{PROPEstGL2} that
  \begin{equation}
  \label{EstJH1}
    \sup_{0\leq t\leq T}\|\partial_yJ\|_2^2\leq C,
  \end{equation}
  for a positive constant $C$ depending only on $R, c_v, \mu, \kappa, m_1, N_1, N_2$, and $T$.

Noticing that
$\partial_yv=\frac1\mu(JG+R\varrho_0\vartheta)$,
one has
\begin{eqnarray*}
\partial_y^2v=\frac1\mu(\partial_yJG+J\partial_yG+R\varrho_0'\vartheta
+R\varrho_0\partial_y\vartheta),
\end{eqnarray*}
and, thus, by the H\"older inequality, (\ref{EstGInf}), (\ref{EstJH1}), it follows from Corollary \ref{COREstL2} and Proposition \ref{PROPEstGL2} that
  \begin{eqnarray*}
  \int_0^T\|\partial_y^2v\|_2^2dt
  &\leq& C\int_0^T(\|\partial_yJ\|_2^2\|G\|_\infty^2+
  \|J\|_\infty^2\|\partial_yG\|_2^2+\|\varrho_0'\|_2^2
  \|\vartheta\|_\infty^2+\|\partial_y\vartheta\|_2^2)dt\\
  &\leq& C\int_0^T(\|G\|_\infty^2+
  \|J\|_\infty^2\|\partial_yG\|_2^2+
  \|\vartheta\|_\infty^2+\|\partial_y\vartheta\|_2^2)dt\leq C,
  \end{eqnarray*}
  for a positive constant $C$ depending only on $R, c_v, \mu, \kappa, m_1, N_1, N_2$, and $T$, proving the conclusion.
\end{proof}

We summarize the estimates obtained in this subsection in the following:

\begin{corollary}
  \label{COREstH1}
  Given $T\in(0,\infty)$. It holds that
  \begin{eqnarray*}
    \sup_{0\leq t\leq T}\|(G,\partial_yJ,\partial_yv)\|_2^2+\int_0^T
    \left\|\left(\frac{\partial_yG}{\sqrt{\varrho_0}},\partial_y^2v,
    \sqrt{\varrho_0}\partial_tv\right)\right\|_2^2 dt\leq C,
  \end{eqnarray*}
  for a positive constant $C$ depending only on $R, c_v, \mu, \kappa, m_1, N_1, N_2$, and $T$, where $m_1$, $N_1$ and $N_2$ are the numbers in Propositions \ref{PROPEstJ}, \ref{PROPEstRhoWei}, and \ref{PROPEstGL2}, respectively.
\end{corollary}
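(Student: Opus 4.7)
The plan is straightforward, since Corollary \ref{COREstH1} is essentially a bookkeeping statement: each bound in the conclusion has already been obtained in Proposition \ref{PROPEstGL2} or Proposition \ref{PROPEstJvH1}. I would simply add the two inequalities. Proposition \ref{PROPEstGL2} supplies
\[
\sup_{0\le t\le T}\|G\|_2^2+\int_0^T\Big\|\frac{\partial_yG}{\sqrt{\varrho_0}}\Big\|_2^2\,dt\le C,
\]
and Proposition \ref{PROPEstJvH1} supplies
\[
\sup_{0\le t\le T}\bigl(\|\partial_yJ\|_2^2+\|\partial_yv\|_2^2\bigr)+\int_0^T\bigl(\|\sqrt{\varrho_0}\partial_tv\|_2^2+\|\partial_y^2v\|_2^2\bigr)\,dt\le C.
\]
Summing these two estimates yields the claimed bound immediately.

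The only point worth checking is that the constants produced by the two propositions depend on exactly the same data, namely $R$, $c_v$, $\mu$, $\kappa$, $m_1$, $N_1$, $N_2$, and $T$; this is clear from their statements, so the combined constant inherits the same dependence. I would also note, for the reader's benefit, that the identity $\varrho_0\partial_tv=\partial_yG$ derived from (\ref{EQv}) makes the $\sqrt{\varrho_0}\partial_tv$ and $\partial_yG/\sqrt{\varrho_0}$ terms appearing in the conclusion literally equal pointwise, so no independent estimation of $\sqrt{\varrho_0}\partial_tv$ is needed beyond what Proposition \ref{PROPEstGL2} already provides.

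There is no genuine obstacle in proving the corollary itself. The substantive work has all been done upstream: the $L^2$ bound on $G$ in Proposition \ref{PROPEstGL2}, which uses the effective-viscous-flux equation (\ref{EQG}), the Neumann condition $\partial_yG|_{y=0,L}=0$ inherited from (\ref{BC}), and the two-sided control of $J$ from Corollary \ref{COREstL2}; and the $H^1$ bound on $(J,v)$ in Proposition \ref{PROPEstJvH1}, which uses the Sobolev embedding to bound $\|G\|_\infty$ in $L^2(0,T)$ and a Gronwall argument on $\|\partial_yJ\|_2^2$. I would therefore keep the proof of the corollary itself to a single sentence citing the two previous propositions.
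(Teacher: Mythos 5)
Your proposal is correct and matches the paper exactly: the corollary is stated as a summary of the preceding subsection, and its content is precisely the sum of the estimates in Propositions \ref{PROPEstGL2} and \ref{PROPEstJvH1}, with the same constant dependence. Your side remark that $\varrho_0\partial_tv=\partial_yG$ makes the two time-integrated terms coincide is also consistent with how the paper itself handles $\sqrt{\varrho_0}\partial_tv$ inside the proof of Proposition \ref{PROPEstJvH1}.
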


\subsection{A priori $H^2$ estimates}
This subsection is devoted to the a prior $H^2$ estimates on $(J, v,
\vartheta)$. As will be shown in this
subsection that one can get the desired
a priori $L^\infty(0,T; H^2)$ estimate of $\vartheta$,
without using the a priori $L^\infty(0,T; H^1)$ bound of it.

As a preparation, we first give some estimates on
$\|\partial_y\vartheta\|_2$ and $\|\partial_t\vartheta\|_\infty$,
in terms of $\|\sqrt{\varrho_0}\partial_t\vartheta\|_2$
and $\|\partial_y\partial_t\vartheta\|_2$, stated in the following
proposition.

\begin{proposition}
 \label{PROPEstH2-PRE}
Given $T\in(0,\infty)$.

(i) It holds that
 \begin{eqnarray*}
   \|\partial_y\vartheta\|_2^2\leq C(1+\|\sqrt{\varrho_0}
  \partial_t\vartheta\|_2),
  \end{eqnarray*}
for a positive constant $C$ depending only on
$R, c_v, \mu, \kappa, m_1, N_1, N_2$, and $T$, where 
$m_1$, $N_1$ and $N_2$ are the numbers in
Propositions \ref{PROPEstJ}, \ref{PROPEstRhoWei}, and \ref{PROPEstGL2}, respectively.

(ii) It holds that
  \begin{eqnarray*}
  \|\partial_t\vartheta\|_\infty&\leq& \sqrt{\frac{2}{\omega_0\bar\varrho}}\|\sqrt{\varrho_0}\partial_t
  \vartheta\|_2 +\sqrt L\left\|\frac{\partial_y\partial_t\vartheta}{
  \sqrt J}\right\|_2, \\
  \|\partial_tv\|_\infty&\leq& \sqrt{\frac{2}{\omega_0\bar\varrho}}\|\sqrt{\varrho_0}\partial_t
  v\|_2 +\sqrt L\left\|\frac{\partial_y\partial_tv}{
  \sqrt J}\right\|_2,
 \end{eqnarray*}
 where $\omega_0$ is the number in Proposition \ref{PROPEstRhoWei}.
\end{proposition}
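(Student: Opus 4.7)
The plan is to handle (ii) first by a direct mean-value argument and then obtain (i) by testing the temperature equation against $\vartheta$ and absorbing with (i) of Proposition \ref{PROPEstRhoWei}.

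For (ii), I will use the representation
\[
w(y,t)=\frac{1}{\omega_0}\int_{\Omega_0}w(z,t)\,dz+\frac{1}{\omega_0}\int_{\Omega_0}\int_z^y\partial_y w(\xi,t)\,d\xi\,dz,
\]
valid for any $w\in H^1((0,L))$, applied successively to $w=\partial_t\vartheta$ and $w=\partial_t v$. For the first term, the Cauchy--Schwarz inequality combined with the definition of $\Omega_0$ (so that $\varrho_0\geq\bar\varrho/2$ there) yields
\[
\frac{1}{\omega_0}\int_{\Omega_0}|w|\,dz\leq\frac{1}{\sqrt{\omega_0}}\|w\|_{L^2(\Omega_0)}\leq\sqrt{\frac{2}{\omega_0\bar\varrho}}\,\|\sqrt{\varrho_0}\,w\|_2.
\]
The second term is bounded in absolute value by $\int_0^L|\partial_y w|\,d\xi$, and another application of Cauchy--Schwarz together with $\int_0^L J\,dy=L$ from Proposition \ref{PROPBASIC} produces the factor $\sqrt{L}\,\|\partial_y w/\sqrt{J}\|_2$. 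This gives both inequalities in (ii) at once, with no appeal to the boundary behavior of $w$.

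For (i), I plan to multiply equation (\ref{EQtheta}) by $\vartheta$ and integrate over $(0,L)$. The conduction term integrates by parts cleanly because $\partial_y\vartheta(0,t)=\partial_y\vartheta(L,t)=0$, leaving $\kappa\,\|\partial_y\vartheta/\sqrt{J}\|_2^2$ on the left and the three terms
\[
-\,c_v\!\int_0^L\!\varrho_0\vartheta\,\partial_t\vartheta\,dy,\qquad -\!\int_0^L\!\pi\vartheta\,\partial_y v\,dy,\qquad \mu\!\int_0^L\!\vartheta\,\frac{(\partial_y v)^2}{J}\,dy
\]
on the right. The first is dominated by $c_v\|\sqrt{\varrho_0}\vartheta\|_2\,\|\sqrt{\varrho_0}\partial_t\vartheta\|_2\le C\|\sqrt{\varrho_0}\partial_t\vartheta\|_2$ via the $L^\infty_t L^2_y$ bound of Corollary \ref{COREstL2}. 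In the remaining two I plan to pull out $\|\vartheta\|_\infty$ and estimate the leftover factors $R\int|\varrho_0\vartheta\,\partial_y v/J|\,dy\le C\|\sqrt{\varrho_0}\vartheta\|_2\|\partial_y v\|_2$ and $\|\partial_y v/\sqrt{J}\|_2^2\le\|\partial_y v\|_2^2/\underline{\mathcal J}$, both of which are uniformly bounded in $t\in[0,T]$ by Corollary \ref{COREstL2} and Proposition \ref{PROPEstJvH1}. The bound $\|\vartheta\|_\infty\le\sqrt{L}\,\|\partial_y\vartheta/\sqrt{J}\|_2+C$ from Proposition \ref{PROPEstRhoWei}(i) then reduces these two terms to $C(1+\|\partial_y\vartheta/\sqrt{J}\|_2)$.

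Collecting everything yields
\[
\kappa\,\|\partial_y\vartheta/\sqrt{J}\|_2^2\le C\|\sqrt{\varrho_0}\partial_t\vartheta\|_2+C\bigl(1+\|\partial_y\vartheta/\sqrt{J}\|_2\bigr),
\]
and the Young inequality absorbs the last linear piece into the left-hand side. The upper bound of $J$ from Corollary \ref{COREstL2} finally converts $\|\partial_y\vartheta/\sqrt{J}\|_2^2$ into $\|\partial_y\vartheta\|_2^2$ up to a constant, giving (i). The only subtlety is to make sure that each supporting $L^\infty_t$ bound (for $\|\sqrt{\varrho_0}\vartheta\|_2$, $\|\partial_y v\|_2$, and the two-sided bound on $J$) is genuinely pointwise in $t\in[0,T]$, which it is; there is no need to quote any higher regularity for $\vartheta$ than what appears on the right-hand side.
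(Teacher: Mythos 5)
Your proposal is correct and follows essentially the same route as the paper: part (ii) is verbatim the paper's mean-value argument over $\Omega_0$, and part (i) is the same energy estimate obtained by testing the temperature equation against $\vartheta$, invoking the bound $\|\vartheta\|_\infty\le \sqrt L\,\|\partial_y\vartheta/\sqrt J\|_2+C$ from Proposition \ref{PROPEstRhoWei}(i), and absorbing by Young's inequality. The only (cosmetic) difference is that the paper first rewrites the right-hand side as $\partial_y v\,G$ using the effective viscous flux and estimates $\|\partial_yv\|_2\|G\|_2\|\vartheta\|_\infty$, whereas you keep the terms $\partial_yv\,\pi$ and $\mu(\partial_yv)^2/J$ separate and control them with the lower bound of $J$ together with the uniform bounds on $\|\sqrt{\varrho_0}\vartheta\|_2$ and $\|\partial_yv\|_2$; both yield the same estimate.
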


\begin{proof}
(i)
Multiplying (\ref{EQtheta'}) by $\vartheta$, integrating the resultant
over $(0,L)$, and integrating by parts, it follows from the H\"older
inequality that
\begin{eqnarray*}
  \kappa\int_0^L\left|\frac{\partial_y\vartheta}{\sqrt J}\right|^2 dy
  &=&\int_0^L(\partial_yvG-c_v\varrho_0\partial_t\vartheta)\vartheta dy
  \\
  &\leq&\|\partial_yv\|_2\|G\|_2\|\vartheta\|_\infty+c_v\|\sqrt{\varrho_0}
\partial_t\vartheta\|_2\|\sqrt{\varrho_0}\vartheta\|_2,
\end{eqnarray*}
from which, by Corollaries \ref{COREstL2}--\ref{COREstH1} and (i) of Proposition \ref{PROPEstRhoWei} that
\begin{eqnarray*}
  &&\|\partial_y\vartheta\|_2^2\leq C \int_0^L\left|\frac{\partial_y\vartheta}{\sqrt J}\right|^2 dy
  \leq C(\|\vartheta\|_\infty+\|\sqrt{\varrho_0}
  \partial_t\vartheta\|_2) \\
  &\leq&C(\|\partial_y\vartheta\|_2+1+\|\sqrt{\varrho_0}
  \partial_t\vartheta\|_2)\leq \frac12 \|\partial_y\vartheta\|_2^2
  +C(1+\|\sqrt{\varrho_0}
  \partial_t\vartheta\|_2)
\end{eqnarray*}
and, thus,
\begin{equation*}
\label{PRE1}
  \|\partial_y\vartheta\|_2^2\leq C(1+\|\sqrt{\varrho_0}
  \partial_t\vartheta\|_2),
\end{equation*}
for a positive constant $C$ depending only on
$R, c_v, \mu, \kappa, m_1, N_1, N_2$, and $T$.

(ii) Recall that
$\Omega_0:=\left\{y\in(0,L)
\big|\varrho_0(y)\geq\frac{\bar\varrho}{2}\right\}$ and $|\omega_0|=|\Omega_0|>0.$
Noticing
$$
\partial_t\vartheta(y,t)=\frac{1}{\omega_0} \int_{\Omega_0}
 \partial_t\vartheta(z,t) dz
+\frac{1}{\omega_0}\int_{\Omega_0}\int_z^y\partial_y\partial_t\vartheta
(\xi,t)d\xi dz,
$$
it follows from the H\"older inequality and Proposition \ref{PROPBASIC}
that
\begin{eqnarray*}
  |\partial_t\vartheta(y,t)|&\leq&\frac{1}{\omega_0} \left|
  \int_{\Omega_0}
  \frac{\sqrt{\varrho_0}\partial_t\vartheta}{\sqrt{\varrho_0}}dz\right|
  +\int_0^L|\partial_y\partial_t\vartheta(\xi,t)|d\xi \\
  &\leq&\sqrt{\frac{2}{\omega_0\bar\varrho}}\|\sqrt{\varrho_0}\partial_t
  \vartheta\|_2 +\left(\int_0^L\left|\frac{\partial_y\partial_t\vartheta}{\sqrt J}
  \right|^2d\xi\right)^{\frac12}
  \left(\int_0^LJd\xi\right)^{\frac12}\\
  &=&\sqrt{\frac{2}{\omega_0\bar\varrho}}\|\sqrt{\varrho_0}\partial_t
  \vartheta\|_2 +\sqrt L\left\|\frac{\partial_y\partial_t\vartheta}{
  \sqrt J}\right\|_2,
\end{eqnarray*}
which implies
\begin{equation*}
\label{PRE2}
\|\partial_t\vartheta\|_\infty\leq \sqrt{\frac{2}{\omega_0\bar\varrho}}\|\sqrt{\varrho_0}\partial_t
  \vartheta\|_2 +\sqrt L\left\|\frac{\partial_y\partial_t\vartheta}{
  \sqrt J}\right\|_2.
\end{equation*}
In the same way as above, the same conclusion holds for $\partial_tv$.
\end{proof}

\begin{proposition}
  \label{PROPEstH2-A}
Given $T\in(0,\infty)$. It holds that
\begin{equation*}
  \sup_{0\leq t\leq T}\left\|\left(\sqrt{\varrho_0}\partial_t\vartheta,
  \frac{\partial_yG}{\sqrt{\varrho_0}}\right)\right\|_2^2
  +\int_0^T\|(\partial_tG,\partial_y\partial_t\vartheta)\|_2^2
  dt\leq C(\|g_0\|_2+\|h_0\|_2),
\end{equation*}
for a positive constant $C$ depending only on
$R, c_v, \mu, \kappa, m_1, N_1, N_2$, and $T$, where
$$
g_0:=\frac{\mu v_0''-R(\varrho_0\theta_0)'}{\sqrt{\varrho_0}},
\quad h_0:=\frac{1}{\sqrt{\varrho_0}}\left[\mu(v_0')^2+\kappa\vartheta_0''
-Rv_0'\varrho_0\vartheta_0\right],
$$
and and $m_1, N_1$ and $N_2$ are the numbers in
  Propositions \ref{PROPEstJ}, \ref{PROPEstRhoWei}, and \ref{PROPEstGL2}, respectively.
\end{proposition}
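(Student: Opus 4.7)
The starting point is to identify the initial values. Evaluating (\ref{EQv}) at $t=0$, where $J\equiv 1$, and dividing by $\sqrt{\varrho_0}$ gives $\sqrt{\varrho_0}\partial_t v|_{t=0}=g_0$; similarly (\ref{EQtheta}) at $t=0$ yields $c_v\sqrt{\varrho_0}\partial_t\vartheta|_{t=0}=h_0$. Thus the compatibility hypotheses supply $L^2$ initial data for these quantities. The overall strategy is the classical one: differentiate (\ref{EQv}) and (\ref{EQtheta}) in $t$, test against $\partial_t v$ and $\partial_t\vartheta$, and close by the Gronwall inequality.

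Differentiating (\ref{EQv}) in $t$, using $\partial_t J=\partial_y v$ and $\partial_t\pi=R\varrho_0(\partial_t\vartheta/J-\vartheta\partial_y v/J^2)$, and integrating by parts (with $\partial_t v|_{y=0,L}=0$) produces
\begin{equation*}
\frac12\frac{d}{dt}\|\sqrt{\varrho_0}\partial_t v\|_2^2 + \mu\left\|\frac{\partial_y\partial_t v}{\sqrt J}\right\|_2^2 = \mu\int_0^L\frac{(\partial_y v)^2}{J^2}\partial_y\partial_t v\,dy + \int_0^L\partial_t\pi\,\partial_y\partial_t v\,dy.
\end{equation*}
Rewriting (\ref{EQtheta}) as $c_v\varrho_0\partial_t\vartheta-\kappa\partial_y(\partial_y\vartheta/J)=\partial_y v\cdot G$, differentiating in $t$, testing with $\partial_t\vartheta$, and integrating by parts (using $\partial_y\partial_t\vartheta|_{y=0,L}=0$, obtained from differentiating (\ref{BC}) in $t$) gives
\begin{equation*}
\frac{c_v}{2}\frac{d}{dt}\|\sqrt{\varrho_0}\partial_t\vartheta\|_2^2 + \kappa\left\|\frac{\partial_y\partial_t\vartheta}{\sqrt J}\right\|_2^2 = \kappa\int_0^L\frac{\partial_y\vartheta\,\partial_y v}{J^2}\partial_y\partial_t\vartheta\,dy + \int_0^L\bigl(G\,\partial_y\partial_t v + \partial_y v\,\partial_t G\bigr)\partial_t\vartheta\,dy.
\end{equation*}
To remove the factor $\partial_t G$, substitute the identity $\partial_t G=\mu\partial_y\partial_t v/J-\mu(\partial_y v)^2/J^2-R\varrho_0\partial_t\vartheta/J+R\varrho_0\vartheta\partial_y v/J^2$, which follows directly from $G=\mu\partial_y v/J-R\varrho_0\vartheta/J$.

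Each right-hand side term is then bounded by the dissipation on the left plus an $L^1_t$ coefficient multiplied by $\|\sqrt{\varrho_0}\partial_t v\|_2^2+\|\sqrt{\varrho_0}\partial_t\vartheta\|_2^2$. Three tools suffice: Corollaries \ref{COREstL2}--\ref{COREstH1}, which give the two-sided bound $\underline{\mathcal J}\le J\le C$ together with $L^\infty_tL^2_y$ controls on $(v,\vartheta,G,\partial_yJ,\partial_yv)$ and $L^2_tL^2_y$ controls on $(\partial_yG,\partial_y^2v,\partial_y\vartheta)$; Proposition \ref{PROPEstH2-PRE}(i), which bounds $\|\partial_y\vartheta\|_2^2$ by $C(1+\|\sqrt{\varrho_0}\partial_t\vartheta\|_2)$; and Proposition \ref{PROPEstH2-PRE}(ii), which controls $\|\partial_tv\|_\infty$ and $\|\partial_t\vartheta\|_\infty$ by $\|\sqrt{\varrho_0}\partial_t\cdot\|_2+\|\partial_y\partial_t\cdot/\sqrt J\|_2$, so that their higher-order portion can be absorbed into the left-hand side via Young's inequality with a small parameter. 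Summing the two identities with a weight balancing $c_v$ yields a differential inequality of the form $\frac{d}{dt}\Psi+D\le (1+\Phi(t))\Psi+F(t)$, where $\Psi:=\|\sqrt{\varrho_0}\partial_t v\|_2^2+c_v\|\sqrt{\varrho_0}\partial_t\vartheta\|_2^2$, $D$ is the dissipation, and $\Phi,F\in L^1(0,T)$ with norms depending only on the prior estimates. Gronwall's inequality, with initial datum $\Psi(0)\le\|g_0\|_2^2+\|h_0\|_2^2$, then finishes the claimed $L^\infty(L^2)$ and $L^2(L^2)$ controls. The bound on $\partial_y G/\sqrt{\varrho_0}$ is immediate from $\partial_y G=\varrho_0\partial_t v$, and the $L^2_tL^2_y$ bound on $\partial_t G$ follows from the explicit formula above together with the already controlled quantities.

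The main obstacle is the cubic cross term $\mu\int\partial_y v\,\partial_y\partial_t v\,\partial_t\vartheta/J\,dy$ that arises after substituting $\partial_tG$, along with the structurally similar $\kappa\int\partial_y\vartheta\,\partial_y v\,\partial_y\partial_t\vartheta/J^2\,dy$. For the first, I would combine the Gagliardo-Nirenberg estimate $\|\partial_y v\|_\infty\le C\|\partial_y v\|_2^{1/2}\|\partial_y^2 v\|_2^{1/2}+C\|\partial_y v\|_2$, whose coefficient lies in $L^4_t$ thanks to the $L^2_t$ bound on $\|\partial_y^2 v\|_2$ from Corollary \ref{COREstH1}, with Proposition \ref{PROPEstH2-PRE}(ii) applied to $\|\partial_t\vartheta\|_\infty$, choosing the Young parameter small enough that the resulting $\|\partial_y\partial_t\vartheta/\sqrt J\|_2$ piece is absorbed into the left-hand side dissipation. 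The second term is handled analogously, using Proposition \ref{PROPEstH2-PRE}(i) to replace $\|\partial_y\vartheta\|_2$ by a quantity controlled by $1+\|\sqrt{\varrho_0}\partial_t\vartheta\|_2^{1/2}$, after which $\|\partial_y v\|_\infty^2$ plays the role of the Gronwall coefficient. The delicate point throughout is that most of the prior controls on $\|\partial_yv\|_\infty$, $\|G\|_\infty$, $\|\vartheta\|_\infty$ are only in time-integrable classes, not pointwise in $t$, so the Young absorption must be arranged so as to keep the coefficient in the Gronwall argument in $L^1(0,T)$ uniformly.
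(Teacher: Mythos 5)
Your architecture is genuinely different from the paper's. The paper never differentiates the momentum equation in time: it derives a parabolic equation for the effective viscous flux $G$ (equation (\ref{EQG}) rewritten via (\ref{EQtheta'})), tests it with $J\partial_tG$ to obtain $\sup_t\|\partial_yG/\sqrt{\varrho_0}\|_2^2$ and $\int_0^T\|\sqrt J\partial_tG\|_2^2dt$ directly, and --- crucially --- rewrites the source of the temperature equation as $\partial_yv\,G=\frac1\mu(JG+R\varrho_0\vartheta)G$ \emph{before} differentiating in $t$, so that the differentiated source contains only $\partial_tG$ and $\partial_t\vartheta$, and the sole coupling term is $\int(2JG+R\varrho_0\vartheta)\partial_tG\,\partial_t\vartheta\,dy$, whose coefficient $\|J\|_\infty\|G\|_2+\|\sqrt{\varrho_0}\vartheta\|_2$ is bounded uniformly in time. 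You instead run the classical $\partial_t$-energy estimate on (\ref{EQv}) (which yields the same quantity, since $\partial_yG/\sqrt{\varrho_0}=\sqrt{\varrho_0}\partial_tv$) and post-substitute the algebraic formula for $\partial_tG$ into the temperature estimate. That is workable in principle, and your identification of the initial data $\sqrt{\varrho_0}\partial_tv|_{t=0}=g_0$, $c_v\sqrt{\varrho_0}\partial_t\vartheta|_{t=0}=h_0$, as well as the recovery of $\partial_tG\in L^2(0,T;L^2)$ from the pointwise formula, are fine.

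However, your proposed treatment of the term you correctly single out as the crux, $\mu\int\partial_yv\,\partial_y\partial_tv\,\partial_t\vartheta/J\,dy$, does not close as written. Placing $\partial_yv$ in $L^\infty_y$ via Gagliardo--Nirenberg and $\partial_t\vartheta$ in $L^\infty_y$ via Proposition \ref{PROPEstH2-PRE}(ii) produces, among other pieces, the trilinear quantity $\|\partial_yv\|_\infty\left\|\frac{\partial_y\partial_tv}{\sqrt J}\right\|_2\left\|\frac{\partial_y\partial_t\vartheta}{\sqrt J}\right\|_2$, a product of \emph{both} dissipations with a coefficient $\|\partial_yv\|_\infty$ that lies only in $L^4(0,T)$, not $L^\infty(0,T)$. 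Any Young splitting then leaves a time-dependent, unbounded factor in front of one of the two dissipations, which cannot be absorbed by a fixed weight in the summed inequality. The repair is immediate with tools you already cite: apply H\"older as $L^2_y\times L^2_y\times L^\infty_y$, placing $\partial_yv$ in $L^2_y$ (bounded uniformly in time by Corollary \ref{COREstH1}) rather than in $L^\infty_y$; the dangerous product then carries a constant coefficient and is absorbed by multiplying the momentum estimate by a sufficiently large fixed constant before summing (harmless, since the momentum estimate generates no $\left\|\frac{\partial_y\partial_t\vartheta}{\sqrt J}\right\|_2^2$ terms). The same remark applies to the companion term $\int G\,\partial_y\partial_tv\,\partial_t\vartheta\,dy$, where you should use $\|G\|_2\in L^\infty_t$ rather than $\|G\|_\infty\in L^2_t$. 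With this correction your route goes through; avoiding exactly this double-dissipation product is the reason the paper eliminates $\partial_y\partial_tv$ from the temperature estimate altogether.
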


\begin{proof}
Rewrite (\ref{EQtheta}) as
\begin{equation}
\label{EQtheta'}
c_v\varrho_0\partial_t\vartheta-\kappa\partial_y\left(\frac{\partial_y
\vartheta}{J}\right)=\partial_yvG,
\end{equation}
or, equivalently,
$$
c_v\varrho_0\partial_t\vartheta-\kappa\partial_y\left(\frac{\partial_y
\vartheta}{J}\right)=\frac1\mu(JG+R\varrho_0\vartheta)G,
$$
from which, differentiating in $t$ and using (\ref{EQJ}), one has
\begin{eqnarray*}
&&c_v\varrho_0\partial_t^2\vartheta-\kappa\partial_y
\left(\frac{\partial_y\partial_t\vartheta}{J}
-\frac{\partial_yv\partial_y\vartheta}{J^2}\right)\\
&=&\frac1\mu(\partial_yvG^2+2JG\partial_tG)
+\frac {R\varrho_0}{\mu}(\partial_t
\vartheta G+\vartheta\partial_tG)\\
&=&\frac{\partial_yv}{\mu}G^2+\frac1\mu(2JG+R\varrho_0\vartheta)
\partial_tG+\frac R\mu\varrho_0\partial_t\vartheta G.
\end{eqnarray*}
Multiplying the above equation by $\partial_t\vartheta$, integrating the
resultant over $(0,L)$, one gets from integration by parts that
\begin{eqnarray}
  &&\frac{c_v}{2}\frac{d}{dt}\int_0^L\varrho_0|\partial_t\vartheta|^2dy
  +\kappa\int_0^L\left|\frac{\partial_y\partial_t\vartheta}{\sqrt J}
  \right|^2 dy\nonumber\\
  &=&\kappa\int_0^L\frac{\partial_yv\partial_y\vartheta}{J^2}
  \partial_y\partial_t\vartheta dy +\frac R\mu\int_0^L\varrho_0G(\partial_t\vartheta)^2 dy\nonumber \\
  &&+\frac1\mu\int_0^L[(2JG+R\varrho_0
  \vartheta)\partial_tG+\partial_yvG^2]\partial_t\vartheta dy.\label{EINQGH1}
\end{eqnarray}

The terms on the right-hand side of (\ref{EINQGH1})
are estimated as follows. By Corollary \ref{COREstL2}, it follows from
the Young inequality and (i) of Proposition \ref{PROPEstH2-PRE} that
\begin{eqnarray*}
  \kappa\int_0^L\frac{\partial_yv\partial_y\vartheta}{J^2}
  \partial_y\partial_t\vartheta dy
  &\leq&\frac\kappa4\left\|\frac{\partial_y\partial_t\vartheta}{\sqrt
  J}\right\|_2^2+C\|\partial_yv\|_\infty^2\|
  \partial_y\vartheta\|_2^2\\
  &\leq&\frac\kappa4\left\|\frac{\partial_y\partial_t\vartheta}{\sqrt
  J}\right\|_2^2+C(\|G\|_\infty^2+\|\vartheta\|_\infty^2)
  \|\partial_y\vartheta\|_2^2\\
  &\leq&\frac\kappa4\left\|\frac{\partial_y\partial_t\vartheta}{\sqrt
  J}\right\|_2^2+C(\|G\|_\infty^2+\|\vartheta\|_\infty^2)
  (1+\|\sqrt{\varrho_0}
  \partial_t\vartheta\|_2),
\end{eqnarray*}
for a positive constant $C$ depending only on $R, c_v, \mu, \kappa,
m_1, N_1$, and $T$.
By Corollary \ref{COREstL2}, Corollary \ref{COREstH1}, and (ii) of
Proposition \ref{PROPEstH2-PRE}, it follows from the H\"older and Young
inequalities that
\begin{eqnarray*}
  &&\frac1\mu\int_0^L[(2JG+R\varrho_0
  \vartheta)\partial_tG
  +\partial_yvG^2]\partial_t\vartheta dy\\
  &\leq&
  C[(\|J\|_\infty\|G\|_2+\|\sqrt{\varrho_0}\vartheta\|_2)\|\partial_t
  G\|_2+\|\partial_yv\|_2\|G\|_2\|G\|_\infty]\|\partial_t
  \vartheta\|_\infty\\
  &\leq&
  C(\|\partial_tG\|_2+\|G\|_\infty)\left(\|\sqrt{\varrho_0}
  \partial_t\vartheta\|_2+\left\|\frac{\partial_y\partial_t\vartheta}
  {\sqrt J}\right\|_2\right)\\
  &\leq&\frac\kappa4\left\|\frac{\partial_y\partial_t\vartheta}{\sqrt J}
  \right\|_2^2+C\left(\|\sqrt J\partial_tG\|_2^2+
  \|\sqrt{\varrho_0}\partial_t\vartheta\|_2^2+\|G\|_\infty^2\right),
\end{eqnarray*}
for a positive constant $C$ depending only on
$R, c_v, \mu, \kappa, m_1, N_1, N_2$, and $T$. Therefore,
one obtains from (\ref{EINQGH1}) that
\begin{eqnarray}
  &&c_v\frac{d}{dt}\|\sqrt{\varrho_0}\partial_t\vartheta\|_2^2
  +\kappa\left\|\frac{\partial_y\partial_t\vartheta}{\sqrt J}
  \right\|_2^2\nonumber\\
  &\leq&A_3\left[\|\sqrt J\partial_tG\|_2^2+(1+\|G\|_\infty^2
  +\|\vartheta\|_\infty^2)
  (\|\sqrt{\varrho_0}\partial_t\vartheta\|_2^2+1)\right], \label{EINQGH1-B}
\end{eqnarray}
for a positive constant $A_3$ depending only on
$R, c_v, \mu, \kappa, m_1, N_1, N_2$, and $T$.

Using (\ref{EQtheta'}), one can rewrite (\ref{EQG}) as
\begin{equation*}
  \partial_tG-\frac\mu J\partial_y\left(\frac{\partial_yG}{\varrho_0}\right)
  =(c_v-R)\frac{\varrho_0}{J}\partial_t\vartheta
  -\frac{\partial_yv}{J}G.
\end{equation*}
Multiplying the above equation by $J\partial_tG$, integrating the
resultant over $(0,L)$, and integrating by parts, it follows from
the H\"older and Young inequalities, Corollary \ref{COREstL2},
and Corollary \ref{COREstH1} that
\begin{eqnarray*}
  &&\frac\mu2\frac{d}{dt}\int_0^L\left|\frac{\partial_yG}
  {\sqrt{\varrho_0}}
  \right|^2dy +\int_0^LJ|\partial_tG|^2dy\\
  &=&(c_v-R)\int_0^L\varrho_0\partial_t\vartheta\partial_tGdy+\int_0^L
  \partial_yvG\partial_tGdy  \\
  &\leq&\frac12\|\sqrt J\partial_tG\|_2^2+C(\|\sqrt{\varrho_0}
  \partial_t\vartheta\|_2^2+\|\partial_yv\|_2^2\|G\|_\infty^2) \\
  &\leq&\frac12\|\sqrt J\partial_tG\|_2^2+C(\|\sqrt{\varrho_0}
  \partial_t\vartheta\|_2^2+\|G\|_\infty^2)
\end{eqnarray*}
and, thus,
\begin{equation}
\label{EINQGH1-C}
  \mu\frac{d}{dt}\left\|
  \frac{\partial_yG}{\sqrt{\varrho_0}}\right\|_2^2
  +\|\sqrt J\partial_tG\|_2^2\leq C (\|\sqrt{\varrho_0}
  \partial_t\vartheta\|_2^2+\|G\|_\infty^2),
\end{equation}
for a positive constant $C$ depending only on
$R, c_v, \mu, \kappa, m_1, N_1, N_2$, and $T$.

Multiplying (\ref{EINQGH1-C}) by $2A_3$ and summing the resultant
with (\ref{EINQGH1-B}), one obtains
\begin{align*}
\frac{d}{dt}\Bigg(c_v\|\sqrt{\varrho_0}\partial_t\vartheta\|_2^2
  &+2A_3\mu\left\|
  \frac{\partial_yG}{\sqrt{\varrho_0}}\right\|_2^2\Bigg)
  +\kappa\left\|\frac{\partial_y\partial_t\vartheta}{\sqrt J}
  \right\|_2^2+A_3\|\sqrt J\partial_tG\|_2^2\nonumber\\
  \leq&\ \ C(1+\|G\|_\infty^2
  +\|\vartheta\|_\infty^2)
  (\|\sqrt{\varrho_0}\partial_t\vartheta\|_2^2+1),
\end{align*}
for a positive constant $C$ depending only on
$R, c_v, \mu, \kappa, m_1, N_1, N_2$, and $T$.
Applying the Gronwall inequality to the above inequality, by
Corollary \ref{COREstL2}, and using (\ref{EstGInf}), the
conclusion follows.
\end{proof}

\begin{proposition}
\label{PROPEstH2-B}
Given $T\in(0,\infty)$. It holds that
\begin{equation*}
  \sup_{0\leq t\leq T}\left(\left\|(\partial_y^2J,
  \partial_y^2v,\partial_y\vartheta,
  \partial_y^2\vartheta)\right\|_2^2+\|\vartheta\|_\infty\right)
  +\int_0^T\|(\partial_y^3v,\partial_y\partial_tv,
  \partial_y^3\vartheta)\|_2^2\leq C,
\end{equation*}
for a positive constant $C$ depending only on
$R, c_v, \mu, \kappa, m_1, N_1, N_2, N_3$, and $T$, where
$$
N_3:=\|\varrho_0''\|_2+\|g_0\|_2+\|h_0\|_2, 
$$
and $m_1, N_1$ and $N_2$ are the numbers in
  Propositions \ref{PROPEstJ}, \ref{PROPEstRhoWei}, and \ref{PROPEstGL2}, respectively.
\end{proposition}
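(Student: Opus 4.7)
The plan is to bootstrap from Proposition~\ref{PROPEstH2-A} to the $H^2$ regularity of $(J,v,\vartheta)$ in the following order: first recover $\|\partial_y\vartheta\|_2$ and $\|\vartheta\|_\infty$ in $L^\infty(0,T)$; then extract $\|\partial_y^2 v\|_2$ and $\|\partial_y^2\vartheta\|_2$ in $L^\infty(0,T)$ purely algebraically from the equations; next upgrade to $L^2(0,T;L^2)$ control of $\partial_y\partial_t v$ and $\partial_y^2 G$; and finally close the $L^\infty(0,T;L^2)$ bound on $\partial_y^2 J$ by a Gronwall argument. The $L^2(0,T;L^2)$ bounds on $\partial_y^3 v$ and $\partial_y^3\vartheta$ then drop out by differentiating the resulting algebraic identities once more in $y$.

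The first two stages are quick. Combining (i) of Proposition~\ref{PROPEstH2-PRE} with Proposition~\ref{PROPEstH2-A} gives $\|\partial_y\vartheta\|_2\in L^\infty(0,T)$, and then (i) of Proposition~\ref{PROPEstRhoWei} together with the lower bound on $J$ from Corollary~\ref{COREstL2} yields $\|\vartheta\|_\infty\in L^\infty(0,T)$. Differentiating the identity $\mu\partial_y v = JG + R\varrho_0\vartheta$ in $y$ gives
\begin{equation*}
\mu\,\partial_y^2 v = \partial_y J\,G + J\,\partial_y G + R\varrho_0'\vartheta + R\varrho_0\,\partial_y\vartheta,
\end{equation*}
whose $L^2$ norm is bounded using $\|\partial_y G\|_2 \leq \sqrt{\bar\varrho}\,\|\partial_y G/\sqrt{\varrho_0}\|_2$ from Proposition~\ref{PROPEstH2-A}, Corollary~\ref{COREstH1}, and the Sobolev embedding $\|G\|_\infty \leq C\|G\|_{H^1}$; this produces $\|\partial_y^2 v\|_2\in L^\infty(0,T)$, and hence $\|\partial_y v\|_\infty\in L^\infty(0,T)$. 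For $\partial_y^2\vartheta$, the idea is to rewrite (\ref{EQtheta'}) as
\begin{equation*}
\kappa\,\partial_y^2\vartheta = J\bigl(c_v\varrho_0\,\partial_t\vartheta - \partial_y v\,G\bigr) + \kappa\,\frac{\partial_y\vartheta\,\partial_y J}{J},
\end{equation*}
control $\|\partial_y\vartheta\,\partial_y J\|_2 \leq \|\partial_y\vartheta\|_\infty\|\partial_y J\|_2$ by the Gagliardo--Nirenberg inequality applied to $\|\partial_y\vartheta\|_\infty$, and absorb the resulting $\|\partial_y^2\vartheta\|_2^{1/2}$ on the left by Young's inequality, yielding $\|\partial_y^2\vartheta\|_2\in L^\infty(0,T)$. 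Differentiating $\mu\partial_y v = JG + R\varrho_0\vartheta$ in $t$ instead (and using $\partial_t J = \partial_y v$) produces
\begin{equation*}
\mu\,\partial_y\partial_t v = \partial_y v\,G + J\,\partial_t G + R\varrho_0\,\partial_t\vartheta,
\end{equation*}
which lies in $L^2(0,T;L^2)$ by Proposition~\ref{PROPEstH2-A} and (ii) of Proposition~\ref{PROPEstH2-PRE}; consequently $\partial_y^2 G = \varrho_0'\partial_t v + \varrho_0\,\partial_y\partial_t v$ belongs to $L^2(0,T;L^2)$.

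The main obstacle is the $L^\infty(0,T;L^2)$ bound for $\partial_y^2 J$. Starting from $\mu\partial_t J = JG + R\varrho_0\vartheta$, I would differentiate twice in $y$, multiply by $\partial_y^2 J$, and integrate over $(0,L)$; handling the cross term $\int_0^L \partial_y J\,\partial_y G\,\partial_y^2 J\,dy$ by Gagliardo--Nirenberg on $\|\partial_y J\|_\infty$, and the delicate terms $\int_0^L J\,\partial_y^2 G\,\partial_y^2 J\,dy$ and $\int_0^L R\varrho_0\,\partial_y^2\vartheta\,\partial_y^2 J\,dy$ by Young's inequality, leads to a differential inequality of the shape
\begin{equation*}
\frac{d}{dt}\|\partial_y^2 J\|_2^2 \leq C\bigl(1+\|\partial_y^2 J\|_2^2\bigr) + C\|\partial_y^2 G\|_2^2.
\end{equation*}
Since $\partial_y^2 J(0) = 0$ (as $J(0)\equiv 1$ by (\ref{IC})) and $\|\partial_y^2 G\|_2^2 \in L^1(0,T)$ by the previous step, Gronwall's inequality closes the bound. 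The $L^2(0,T;L^2)$ bounds on $\partial_y^3 v$ and $\partial_y^3\vartheta$ follow by differentiating the algebraic expressions for $\partial_y^2 v$ and $\partial_y^2\vartheta$ once more in $y$ and combining all the bounds assembled above, with $\partial_y\partial_t\vartheta \in L^2(0,T;L^2)$ from Proposition~\ref{PROPEstH2-A} supplying the one new input needed for $\partial_y^3\vartheta$.
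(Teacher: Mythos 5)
Your proposal is correct, and the first four stages (recovering $\|\partial_y\vartheta\|_2$ and $\|\vartheta\|_\infty$, then $\|\partial_y^2v\|_2$ and $\|\partial_y^2\vartheta\|_2$ algebraically from $\mu\partial_yv=JG+R\varrho_0\vartheta$ and (\ref{EQtheta'}), then $\partial_y\partial_tv\in L^2(0,T;L^2)$) coincide with the paper's. Where you genuinely diverge is in closing the $\partial_y^2J$ bound. The paper never estimates $\partial_y^2G$; instead it expresses $\partial_y^3v$ and $\partial_y^3\vartheta$ through the momentum and temperature equations (which brings in $\partial_y^2\pi$ and hence $\partial_y^2J$ and $\varrho_0''$), obtains $\int_0^t\|(\partial_y^3v,\partial_y^3\vartheta)\|_2^2\,d\tau\leq C(1+\int_0^t\|\partial_y^2J\|_2^2\,d\tau)$, and closes the loop with the representation $\partial_y^2J=\int_0^t\partial_y^3v\,d\tau$, so that $\|\partial_y^2J\|_2^2(t)\leq t\int_0^t\|\partial_y^3v\|_2^2\,d\tau$ and a single Gronwall handles $\partial_y^2J$ and the third derivatives simultaneously. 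You instead decouple the two: you first observe that $\partial_y^2G=\varrho_0'\partial_tv+\varrho_0\partial_y\partial_tv\in L^2(0,T;L^2)$ (legitimate, since $\|\partial_tv\|_\infty\in L^2(0,T)$ follows from (ii) of Proposition \ref{PROPEstH2-PRE} once $\partial_y\partial_tv\in L^2(0,T;L^2)$ is known), then run a direct energy estimate on $\partial_y^2J$ from $\mu\partial_tJ=JG+R\varrho_0\vartheta$ differentiated twice, and only afterwards read off $\partial_y^3v$ and $\partial_y^3\vartheta$. Both routes ultimately rest on the same ingredient, namely $\partial_y(\varrho_0\partial_tv)$, and both use $\|\varrho_0''\|_2$ at the same spot; your version is slightly more modular (the $J$ estimate stands on its own and uses $\partial_y^2J(\cdot,0)=0$), while the paper's avoids introducing $\partial_y^2G$ as an intermediate object at the cost of a coupled Gronwall. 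Either is a complete proof of the stated bound.
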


\begin{proof}
Combining (i)
of Proposition \ref{PROPEstH2-PRE} and Proposition \ref{PROPEstH2-A},
one gets
\begin{equation}
\sup_{0\leq t\leq T}\|\partial_y\vartheta\|_2^2\leq C\sup_{0\leq t\leq
T}(1+\|\sqrt{\varrho_0}\partial_t\vartheta\|_2^2)
\leq C(1+\|g_0\|_2^2+\|h_0\|_2^2)\label{ESTH2-1}
\end{equation}
and, thus, by (i) of
Proposition \ref{PROPEstRhoWei} and Corollary
\ref{COREstL2} that
that
\begin{equation}
  \sup_{0\leq t\leq T}\|\vartheta\|_\infty
  \leq C\sup_{0\leq t\leq T}(\|\partial_y\vartheta\|_2+1)
  \leq C(1+\|g_0\|_2+\|h_0\|_2),\label{ESTH2-2}
\end{equation}
for a positive constant $C$ depending only on
$R, c_v, \mu, \kappa, m_1, N_1, N_2$, and $T$. Using (\ref{ESTH2-1})--(\ref{ESTH2-2}), it follows from the H\"older inequality and
Corollaries \ref{COREstL2}--\ref{COREstH1} that
\begin{eqnarray}
  \|\partial_y\pi\|_2&=&R\left\|\frac{\varrho_0'}{J}\vartheta+\frac{
  \varrho_0}{J}\partial_y\vartheta-\frac{\varrho_0}{J^2}\partial_y J
  \vartheta\right\|_2\nonumber\\
  &\leq&C(\|\varrho_0'\|_2\|\vartheta\|_\infty+\|\varrho_0\|_\infty \|\partial_y\vartheta\|_2+\|\varrho_0\|_\infty\|\partial_yJ\|_2
  \|\vartheta\|_\infty) \nonumber\\
  &\leq&C(1+\|g_0\|_2+\|h_0\|_2),\label{ESTH2-7}
\end{eqnarray}
for a positive constant $C$ depending only on
$R, c_v, \mu, \kappa, m_1, N_1, N_2$, and $T$.

Noticing that
$\partial_yv=\frac1\mu(JG+R\varrho_0\vartheta)$, and using (\ref{EQJ}),
one has
\begin{eqnarray*}
\partial_t\partial_yv=\frac1\mu(\partial_yvG+J\partial_tG+R
\varrho_0\partial_t\vartheta),\\
\partial_y^2v=\frac1\mu(\partial_yJG+J\partial_yG+R\varrho_0'\vartheta
+R\varrho_0\partial_y\vartheta),
\end{eqnarray*}
and, thus, by the H\"older and Sobolev
embedding inequalities, and using (\ref{ESTH2-1})--(\ref{ESTH2-2}),
it follows from Corollaries \ref{COREstL2}--\ref{COREstH1} and
Proposition \ref{PROPEstH2-A} that
\begin{eqnarray}
  \int_0^T\|\partial_t\partial_yv\|_2^2dt&\leq&
  C\int_0^T(\|\partial_yv\|_2^2\|G\|_\infty^2+
  \|J\|_\infty^2\|\partial_tG\|_2^2+\|\sqrt{\varrho_0}\partial_t
  \vartheta\|_2^2)dt \nonumber\\
  &\leq&C\int_0^T(\|\partial_yv\|_2^2\|G\|_{H^1}^2+
  \|J\|_\infty^2\|\partial_tG\|_2^2+\|\sqrt{\varrho_0}\partial_t
  \vartheta\|_2^2)dt\nonumber\\
  &\leq&C(1+\|g_0\|_2^2+\|h_0\|_2^2), \label{ESTH2-3}
\end{eqnarray}
and
\begin{eqnarray}
  \sup_{0\leq t\leq T}\|\partial_y^2v\|_2^2
  &\leq&C\sup_{0\leq t\leq T}(\|\partial_yJ\|_2^2\|G\|_\infty^2+
  \|J\|_\infty^2\|\partial_yG\|_2^2+\|\varrho_0'\|_2^2
  \|\vartheta\|_\infty^2+\|\partial_y\vartheta\|_2^2)\nonumber\\
  &\leq&C\sup_{0\leq t\leq T}(\|\partial_yJ\|_2^2\|G\|_{H^1}^2+
  \|J\|_\infty^2\|\partial_yG\|_2^2+
  \|\vartheta\|_\infty^2+\|\partial_y\vartheta\|_2^2)\nonumber\\
  &\leq&C(1+\|g_0\|_2^2+\|h_0\|_2^2),\label{ESTH2-4}
\end{eqnarray}
for a positive constant $C$ depending only on
$R, c_v, \mu, \kappa, m_1, N_1, N_2$, and $T$.

Using (\ref{EQtheta'}),
we have
\begin{equation*}
  \partial_y^2\vartheta=J\partial_y\left(\frac{\partial_y\vartheta}{J}
  \right)+\partial_yJ\frac{\partial_y\vartheta}{J}
  =\frac J\kappa(c_v\varrho_0\partial_t\vartheta-\partial_yv G)+
  \partial_yJ\frac{\partial_y\vartheta}{J}
\end{equation*}
and, thus, by the H\"older, Young
and Gagliardo-Nirenburg inequalities and (\ref{ESTH2-1}),
it follows from Corollaries \ref{COREstL2}--\ref{COREstH1}, and Proposition \ref{PROPEstH2-A} that
\begin{eqnarray}
  \|\partial_y^2\vartheta\|_2&\leq& C(\|\sqrt{\varrho_0}\partial_t\vartheta\|_2+\|\partial_yv\|_2
  \|G\|_\infty+\|\partial_yJ\|_2
  \|\partial_y\vartheta\|_\infty)\nonumber\\
  &\leq& C(1+\|g_0\|_2+\|h_0\|_2+\|G\|_{H^1}+\|\partial_y\vartheta\|_2^{\frac12}
  \|\partial_y^2\vartheta\|_2^{\frac12})\nonumber\\
  &\leq&\frac12\|\partial_y^2\vartheta\|_2+C(1+\|g_0\|_2+\|h_0\|_2),
  \nonumber
\end{eqnarray}
which gives
\begin{equation}
  \sup_{0\leq t\leq T}\|\partial_y^2\vartheta\|_2^2\leq C(1+\|g_0\|_2^2+\|h_0\|_2^2),  \label{ESTH2-5}
\end{equation}
for a positive constant $C$ depending only on
$R, c_v, \mu, \kappa, m_1, N_1, N_2$, and $T$.

By calculations, one deduces
\begin{eqnarray*}
  \partial_y^2\pi&=&R\partial_y^2\left(\frac{\varrho_0}{J}
  \vartheta\right)
  =R\left[\varrho_0''\frac{\vartheta}{J}+2\varrho_0'\partial_y
  \left(\frac{\vartheta}{J}\right)+\varrho_0\partial_y^2
  \left(\frac\vartheta J\right)\right]\\
  &=&R\left[\varrho_0''\frac{\vartheta}{J}
  +2\varrho_0'\left(\frac{\partial_y \vartheta}{J}-\frac{\partial_yJ}{J^2}\vartheta\right)\right.\\
  &&\left.+\varrho_0\left(\frac{\partial_y^2\vartheta}{J}-\frac{2}{J^2} \partial_y J\partial_y\vartheta+2\frac{(\partial_yJ)^2}{J^3}\vartheta
  -\frac{\partial_y^2J}{J^2}\vartheta\right)\right].
\end{eqnarray*}
Therefore, by the H\"older and Sobolev embedding inequalities,
using (\ref{ESTH2-1}), (\ref{ESTH2-2}), and (\ref{ESTH2-5}), it follows
from Corollary \ref{COREstL2} and Corollary \ref{COREstH1} that
\begin{eqnarray}
  \|\partial_y^2\pi\|_2&\leq&C\Big[\|\varrho_0''\|_2
  \left\| \vartheta \right\|_\infty+2\|\varrho_0'\|_\infty
  \left(\left\| \partial_y\vartheta \right\|_2
  +\left\| \partial_y J \right\|_2\|\vartheta\|_\infty\right)
   \nonumber\\
  &&+\|\varrho_0\|_\infty\left(\left\| \partial_y^2\vartheta
  \right\|_2+2\left\| \partial_y J \right\|_\infty \|\partial_y\vartheta\|_2+2\left\| \partial_yJ \right\|_\infty
  \|\partial_yJ\|_2\|\vartheta\|_\infty\right)\nonumber\\
  && +\|\varrho_0\|_\infty\|\partial_y^2J\|_2\left\|
   \vartheta
  \right\|_\infty\Big] \leq C(1+\|g_0\|_2+\|h_0\|_2
  +\|\partial_y^2J
  \|_2),\label{ESTH2-6}
\end{eqnarray}
for a positive constant $C$ depending only on
$R, c_v, \mu, \kappa, m_1, N_1, N_2, N_3$, and $T$.

Using (\ref{EQtheta'}) and (\ref{EQv}), we deduce
\begin{eqnarray*}
  \partial_y^3\vartheta&=&
  \partial_y^2\left(\frac{\partial_y\vartheta}{J}\right) J
  +2\partial_y\left(\frac{\partial_y\vartheta}{J}\right)\partial_yJ
  +\frac{\partial_y\vartheta}{J}\partial_y^2J\\
  &=&\frac J\kappa\left[c_v\left(\varrho_0\partial_t\partial_y\vartheta
  +\varrho_0'\partial_t\vartheta\right)
  -\partial_yv\partial_yG-\partial_y^2vG\right]\\
  &&+\frac2\kappa\partial_yJ(c_v\varrho_0\partial_t\vartheta-\partial_y
  vG)+\frac{\partial_y\vartheta}{J}\partial_y^2J
\end{eqnarray*}
and
\begin{eqnarray*}
  \partial_y^3v&=&
  \partial_y^2\left(\frac{\partial_yv}{J}\right) J
  +2\partial_y\left(\frac{\partial_yv}{J}\right)\partial_yJ
  +\frac{\partial_yv}{J}\partial_y^2J\\
  &=&\frac J\mu(\varrho_0'\partial_tv+\varrho_0\partial_t\partial_yv
  +\partial_y^2\pi)+\frac2\mu\partial_yJ(\varrho_0\partial_tv
  +\partial_y\pi)+\frac{\partial_yv}{J}\partial_y^2J.
\end{eqnarray*}
Therefore, by the H\"older and Sobolev embedding inequalities, using
(\ref{ESTH2-1}), (\ref{ESTH2-7}), (\ref{ESTH2-4}), (\ref{ESTH2-5}),
(\ref{ESTH2-6}),
Corollary
\ref{COREstL2}, Corollary \ref{COREstH1}, (ii) of
Proposition \ref{PROPEstH2-PRE},
and Proposition \ref{PROPEstH2-A}, we deduce
\begin{eqnarray}
  \|\partial_y^3\vartheta\|_2&\leq&C[\|\partial_y\partial_t
  \vartheta\|_2
  +\|\varrho_0'\|_2\|\partial_t\vartheta\|_\infty+\|\partial_yv
  \|_\infty\|\partial_y
  G\|_2+\|\partial_y^2v\|_2\|G\|_\infty\nonumber\\
  &&+\|\partial_yJ\|_\infty(\|\sqrt{
  \varrho_0}\partial_t\vartheta\|_2+\|\partial_yv\|_2\|G\|_\infty)
  +\|\partial_y\vartheta\|_\infty\|\partial_y^2J\|_2]\nonumber\\
  &\leq&C[\|\partial_y\partial_t
  \vartheta\|_2
  +\|\sqrt{\varrho_0}\partial_t\vartheta\|_2+
  \|\partial_yv\|_{H^1}\|\partial_y
  G\|_2+\|\partial_y^2v\|_2\|G\|_{H^1}\nonumber\\
  &&+\|\partial_yJ\|_{H^1}(\|\sqrt{
  \varrho_0}\partial_t\vartheta\|_2+\|\partial_yv\|_2\|G\|_{H^1})
  +\|\partial_y\vartheta\|_{H^1}\|\partial_y^2J\|_2]\nonumber\\
  &\leq&C(1+\|g_0\|_2+\|h_0\|_2+\|\partial_y\partial_t\vartheta\|_2+
  \|\partial_y^2J\|_2)\nonumber\\
  &\leq&C(1+\|g_0\|_2+\|h_0\|_2)(1+\|\partial_y\partial_t\vartheta\|_2+
  \|\partial_y^2J\|_2), \label{ESTH2-8}
\end{eqnarray}
and
\begin{eqnarray}
  \|\partial_y^3v\|_2&\leq&
  C[\|\partial_tv\|_\infty+\|\partial_y\partial_tv\|_2
  +\|\partial_y^2\pi\|_2
  \nonumber\\
  &&+\|\partial_yJ\|_\infty(\|\sqrt{\varrho_0}\partial_tv\|_2+\|\partial_y
  \pi\|_2)+\|\partial_yv\|_\infty\|\partial_y^2 J\|_2]\nonumber\\
  &\leq&C[\|\partial_y\partial_tv\|_2+\|\sqrt{\varrho_0}\partial_tv\|_2+
  1+\|g_0\|_2+\|h_0\|_2+\|\partial_y^2J\|_2\nonumber\\
  &&+\|\partial_yJ\|_{H^1}(1+\|g_0\|_2+\|h_0\|_2)+\|\partial_yv
  \|_{H^1}\|\partial_y^2 J\|_2]\nonumber\\
  &\leq&C(1+\|g_0\|_2+\|h_0\|_2)(1+\|\partial_y\partial_tv\|_2+
  \|\partial_y^2J\|_2),\label{ESTH2-9}
\end{eqnarray}
for a positive constant $C$ depending only on
$R, c_v, \mu, \kappa, m_1, N_1, N_2, N_3$, and $T$.

Combining (\ref{ESTH2-8}) with (\ref{ESTH2-9}), and using (\ref{ESTH2-3}), one obtains
\begin{eqnarray}
  \int_0^t\|(\partial_y^3v,\partial_y^3\vartheta)\|_2^2d\tau
  &\leq& C(1+\|g_0\|_2^2+\|h_0\|_2^2)\int_0^t(1+\|\partial_y\partial_t
  v\|_2^2+\|\partial_y^2J\|_2^2)d\tau \nonumber\\
  &\leq& C(1+\|g_0\|_2^2+\|h_0\|_2^2)^2\left(1+
  \int_0^t\|\partial_y^2J\|_2^2d\tau\right), \label{ESTH2-10}
\end{eqnarray}
for any $t\in[0,T]$, where $C$ is a positive constant depending only on
$R, c_v, \mu, \kappa, m_1$, $N_1$, $N_2$, $N_3$, and $T$.
Using (\ref{EQJ}), one gets $J=1+\int_0^t\partial_yvd\tau$ and, thus,
it follows from the H\"older inequality that
\begin{eqnarray*}
  \|\partial_y^2J\|_2^2(t)&=&\left(\left\|
  \int_0^t\partial_y^3vd\tau\right\|_2\right)^2\leq \left(\int_0^t\|\partial_y^3v\|_2d\tau\right)^2 \leq t
  \int_0^t\|\partial_y^3v\|_2^2d\tau.
\end{eqnarray*}
Combining this with (\ref{ESTH2-10}), and applying the Gronwall inequality, one obtains
\begin{eqnarray*}
\int_0^T(\|\partial_y^3v\|_2^2+\|\partial_y^3\vartheta\|_2^2)dt\leq C
\end{eqnarray*}
and, further, that
\begin{eqnarray*}
  \sup_{0\leq t\leq C}\|\partial_y^2J\|_2^2\leq C,
\end{eqnarray*}
for a positive constant $C$ depending only on
$R, c_v, \mu, \kappa, m_1$, $N_1$, $N_2$, $N_3$, and $T$.
\end{proof}

We summarize the a priori estimates obtained in this section as:

\begin{corollary}
  \label{CORESTALL}
  Given $T\in(0,T)$ and let $m_1, N_1, N_2$ and $N_3$ be the numbers in
  Propositions \ref{PROPEstJ}, \ref{PROPEstRhoWei}, \ref{PROPEstGL2}, and \ref{PROPEstH2-B}, respectively. Then, there are two positive constants $\underline C$ and $C$ depending only on $R, c_v, \mu, \kappa, m_1$, $N_1$, $N_2$, $N_3$, and $T$, such that
  $$
  \inf_{0\leq t\leq T}\inf_{y\in(0,L)}J(y,t)\geq \underline C,
  $$
  and
  \begin{eqnarray*}
    \sup_{0\leq t\leq T}\|(J,v,\vartheta)\|_{H^2}^2+\int_0^T
    (\|\partial_tJ\|_{H^2}^2+\|(v,\vartheta)\|_{H^3}^2
    +\|(\partial_tv,\partial_t\vartheta)\|_{H^1}^2)dt\leq C.
  \end{eqnarray*}
\end{corollary}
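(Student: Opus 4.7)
The plan is to assemble the conclusion directly from the estimates already proven in Propositions \ref{PROPEstJ}, \ref{PROPEstRhoWei}, \ref{PROPEstGL2}, \ref{PROPEstJvH1}, \ref{PROPEstH2-A}, and \ref{PROPEstH2-B}, together with Corollaries \ref{COREstL2} and \ref{COREstH1}. No new energy estimate is needed; the only work is to reshape weighted quantities like $\|\sqrt{\varrho_0}v\|_2$, $\|\sqrt{\varrho_0}\partial_tv\|_2$, $\|\sqrt{\varrho_0}\partial_t\vartheta\|_2$ into genuine $L^2$ norms, and to rewrite a few time integrals using the identity $\partial_tJ=\partial_yv$.

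For the lower bound on $J$, I would invoke Proposition \ref{PROPEstJ}: $J(y,t)\geq(m_1f_1(t))^{-1}$, and note that $f_1$ is a fixed continuous function of $t$ depending only on the listed constants, so $\underline C:=(m_1f_1(T))^{-1}$ works on $[0,T]$. For the $L^\infty_t H^2_y$ bound on $(J,v,\vartheta)$, I would collect the pieces in three groups:
\begin{itemize}
\item For $J$: $\|J\|_\infty$ from Corollary \ref{COREstL2}, $\|\partial_yJ\|_2$ from Proposition \ref{PROPEstJvH1}, and $\|\partial_y^2 J\|_2$ from Proposition \ref{PROPEstH2-B}; since $L$ is bounded these give $\|J\|_{H^2}$.
\item For $v$: the Poincar\'e inequality (valid because $v(0,t)=v(L,t)=0$) converts $\|\partial_yv\|_2$ into $\|v\|_2$, then Proposition \ref{PROPEstJvH1} controls $\|\partial_yv\|_2$ and Proposition \ref{PROPEstH2-B} controls $\|\partial_y^2v\|_2$.
\item For $\vartheta$: the $L^\infty_{y,t}$ bound on $\vartheta$ in Proposition \ref{PROPEstH2-B} immediately gives $\|\vartheta\|_2\leq\sqrt L\|\vartheta\|_\infty$, while $\|\partial_y\vartheta\|_2$ and $\|\partial_y^2\vartheta\|_2$ are supplied by estimates (\ref{ESTH2-1}) and (\ref{ESTH2-5}) in the proof of Proposition \ref{PROPEstH2-B}.
\end{itemize}

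For the time integrals, I would proceed as follows. Using $\partial_tJ=\partial_yv$, the quantity $\int_0^T\|\partial_tJ\|_{H^2}^2\,dt$ reduces to $\int_0^T\|\partial_yv\|_{H^2}^2\,dt$, whose top-order piece $\|\partial_y^3v\|_2\in L^2(0,T)$ comes from Proposition \ref{PROPEstH2-B} and whose lower-order pieces are bounded in $L^\infty_t$ by the preceding step. The bounds on $\int_0^T\|(v,\vartheta)\|_{H^3}^2\,dt$ then follow from the $L^\infty_t H^2_y$ control combined with the $L^2(0,T)$ bounds on $\|\partial_y^3v\|_2$ and $\|\partial_y^3\vartheta\|_2$ already recorded in Proposition \ref{PROPEstH2-B}. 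Finally, for the time-derivative terms: since $\partial_tv(0,t)=0$, Poincar\'e yields $\|\partial_tv\|_2\leq L\|\partial_y\partial_tv\|_2$, and $\|\partial_y\partial_tv\|_2\in L^2(0,T)$ by (\ref{ESTH2-3}); for $\partial_t\vartheta$, I would use (ii) of Proposition \ref{PROPEstH2-PRE} to write $\|\partial_t\vartheta\|_2\leq\sqrt L\|\partial_t\vartheta\|_\infty\leq C(\|\sqrt{\varrho_0}\partial_t\vartheta\|_2+\|\partial_y\partial_t\vartheta\|_2)$, whose right-hand side is integrable on $(0,T)$ by Proposition \ref{PROPEstH2-A}.

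There is no genuine obstacle here since every quantitative bound has already been established; the only mildly delicate point is replacing $\sqrt{\varrho_0}$-weighted norms by plain $L^2$ norms in $v$, $\partial_tv$, and $\partial_t\vartheta$, which is done via Poincar\'e (for $v$ and $\partial_tv$, both of which vanish at the endpoints) and via Proposition \ref{PROPEstH2-PRE}(ii) for $\partial_t\vartheta$. Putting these together yields the stated bounds with a constant depending only on $R,c_v,\mu,\kappa,m_1,N_1,N_2,N_3,T$.
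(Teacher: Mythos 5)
Your proposal is correct and follows essentially the same route as the paper's own (much terser) proof: collect the already-established bounds from Corollaries \ref{COREstL2}--\ref{COREstH1} and Propositions \ref{PROPEstJvH1}, \ref{PROPEstH2-A}, \ref{PROPEstH2-B}, then recover the unweighted norms of $v$, $\partial_tv$, $\partial_t\vartheta$ and the bound on $\partial_tJ$ via the Poincar\'e inequality, equation (\ref{EQJ}), and (ii) of Proposition \ref{PROPEstH2-PRE}. No gaps.
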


\begin{proof}
All the estimates except
$$
\sup_{0\leq t\leq T}\|v\|_2^2+\int_0^T(\|\partial_tv\|_2^2+\|\partial_t
\vartheta\|_2^2+\|\partial_tJ\|_{H^2}^2)dt
$$
are directly corollaries of Corollary \ref{COREstL2}, Corollary
\ref{COREstH1}, Proposition \ref{PROPEstH2-A}, and Proposition
\ref{PROPEstH2-B}. While the remaining estimates in the above follow
easily from the known ones by the Poincar\'e inequality, using equation
(\ref{EQJ}), or (ii) of Proposition \ref{PROPEstH2-PRE}.
\end{proof}

\section{Proof of Theorem \ref{thm}}

\begin{proof}[\textbf{Proof of Theorem \ref{thm}}]
For $\varepsilon\in(0,1)$, set
$$
\varrho_{0\varepsilon}=\varrho_0+\varepsilon,\quad
\vartheta_{0\varepsilon}=\vartheta_0+\varepsilon.
$$
Let $E_{0\varepsilon}, m_{1\varepsilon}$, and $N_{i\varepsilon}$,
$i=1,2,3,$ be the corresponding numbers as stated in Section
\ref{SecApri} for $(\varrho_{0\varepsilon}, v_0,
\vartheta_{0\varepsilon})$, that is 
\begin{eqnarray*}
E_{0\varepsilon}:=
\int_0^L\left(\frac{\varrho_{0\varepsilon}}{2}v_0^2
+c_v\varrho_{0\varepsilon}\vartheta_{0\varepsilon}\right)dy,\quad m_{1\varepsilon}=\exp\left\{\frac2\mu\sqrt{2\|\varrho_{0\varepsilon}
\|_1E_{0\varepsilon}}\right\},
\end{eqnarray*}
and $N_{i\varepsilon}$,
$i=1,2,3,$ will be given later. 
We first verify that all these numbers
are uniformly bounded.
One can easily check that
\begin{eqnarray*}
  \|\varrho_0\|_\infty\leq\|\varrho_{0\varepsilon}\|_\infty\leq \|\varrho_0\|_\infty+1, \quad \|\varrho_0\|_1\leq\|\varrho_{0\varepsilon}\|_1\leq \|\varrho_0\|_1+L, \\
  E_0\leq E_{0\varepsilon}\leq E_0+\|v_0\|_2^2+c_v(\|\varrho_0\|_1+
  \|\vartheta_0\|_1+L),
\end{eqnarray*}
for $\varepsilon\in(0,1)$.
Noticing that
\begin{eqnarray*}
  \Omega_0:=\left\{y\in(0,L)\Bigg|\varrho_0(y)\geq
  \frac{\bar\varrho}{2}
  \right\}\subseteq\Omega_{0\varepsilon}:= \left\{y\in(0,L)\Bigg|\varrho_{0\varepsilon}(y)\geq
  \frac{\bar\varrho}{2}
  \right\},
\end{eqnarray*}
we have
$$
0<\omega_0:=|\Omega_0|\leq
\omega_{0\varepsilon}:=|\Omega_{0\varepsilon}|\leq L,
$$
for $\varepsilon\in(0,1)$.
Therefore, we have
$$
\underline m_1\leq m_{1\varepsilon}\leq\bar m_1,
$$
and
\begin{eqnarray*}
  N_{1\varepsilon}:=\frac{E_{0\varepsilon}}{c_v} +\|\varrho_{0\varepsilon}\|_\infty
  +\frac{1}{\|\varrho_{0\varepsilon}\|_\infty}
  +L+\frac1L+\frac{1}{\omega_{0\varepsilon}}+\|\varrho_0'\|_\infty\leq \bar N_1,
\end{eqnarray*}
for some positive constants $\underline m_1, \bar m_1,$ and $\bar N_1$ independent of $\varepsilon\in(0,1)$.
We have
\begin{eqnarray*}
  N_{2\varepsilon}&:=&\|\sqrt{\varrho_{0\varepsilon}}v_0^2\|_2
  +\|\sqrt{\varrho_{0\varepsilon}}v_0\|_2+\|v_0'\|_2 \\
  &\leq&\|\sqrt{\varrho_0}v_0^2\|_2+\sqrt\varepsilon\|v_0^2\|_2
  +\|\sqrt{\varrho_0}v_0\|_2+\sqrt\varepsilon\|v_0\|_2+\|v_0'\|_2\\
  &\leq&\|\sqrt{\varrho_0}v_0^2\|_2+\|v_0^2\|_2+
  \|\sqrt{\varrho_0}v_0\|_2+\|v_0\|_2+\|v_0'\|_2,
\end{eqnarray*}
for any $\varepsilon\in(0,1)$. Set
$$
N_{3\varepsilon}:=\|g_{0\varepsilon}\|_2+\|h_{0\varepsilon}\|_2
+\|\varrho_{0\varepsilon}''\|_2,
$$
where
$$
g_{0\varepsilon}=\frac{\mu v_0''-R(\varrho_{0\varepsilon}\vartheta_{0\varepsilon})'}
{\sqrt{\varrho_{0\varepsilon}}}
,\quad h_{0\varepsilon}=\frac{1}{\sqrt{\varrho_{0\varepsilon}}}
[\mu(v_0')^2+\kappa\vartheta_{0\varepsilon}''-
Rv_0'\varrho_{0\varepsilon}\vartheta_{0\varepsilon}].
$$
By direct calculations, and using the compatibility conditions, we have
\begin{eqnarray*}
  \|g_{0\varepsilon}\|_2&=&\left\|\frac{1}{\sqrt{\varrho_{0\varepsilon}}}
  (\sqrt{\varrho_0} g_0+\varepsilon(\varrho_0'+\vartheta_0'))\right\|_2\\
  &\leq&\|g_0\|_2+\sqrt\varepsilon(\|\varrho_0'\|_2+\|\vartheta_0'\|_2)
  \leq\|g_0\|_2+ \|\varrho_0'\|_2+\|\vartheta_0'\|_2,
\end{eqnarray*}
and
\begin{eqnarray*}
  \|h_{0\varepsilon}\|_2&=&\left\|\frac{1}{\sqrt{\varrho_{0\varepsilon}}}
  [\sqrt{\varrho_0}h_0-Rv_0'(\varepsilon\varrho_0+\varepsilon\vartheta_0
  +\varepsilon^2)\right\|_2\\
  &\leq&\|h_0\|_2+R\|v_0'\|_2(\sqrt\varepsilon\|\varrho_0\|_\infty+
  \sqrt\varepsilon\|\vartheta_0\|_\infty +\varepsilon^{\frac32})\\
  &\leq&\|h_0\|_2+R\|v_0'\|_2(\|\varrho_0\|_\infty+
  \|\vartheta_0\|_\infty +1),
\end{eqnarray*}
for any $\varepsilon\in(0,1)$. Therefore, we have
$$
N_{3\varepsilon}\leq \bar N_3,
$$
for some positive constant $\bar N_3$ independent of $\varepsilon\in(0,1)$.

By Proposition \ref{PROPGLOBAL}, for each $\varepsilon\in(0,1)$, there
is a unique global strong solution $(J_\varepsilon, v_\varepsilon,
\vartheta_\varepsilon)$ to system (\ref{EQJ})--(\ref{EQtheta}),
with $\varrho_0$ replaced by $\varrho_{0\varepsilon}$,
subject to (\ref{BC}) and the initial condition
$$
(J_\varepsilon, v_\varepsilon, \theta_\varepsilon)|_{t=0}=(1, v_0,
\theta_{0\varepsilon}).
$$
Due to the uniform boundedness of $m_{1\varepsilon}$, $N_{1\varepsilon}, N_{2\varepsilon},$ and $N_{3\varepsilon}$, obtained in the above, it
follows from Corollary \ref{CORESTALL} that there are
two positive constants, independent of $\varepsilon\in(0,1)$, such
that
\begin{equation}
  \inf_{0\leq t\leq T}\inf_{y\in(0,L)}J_\varepsilon
  (y,t)\geq \underline C,\label{4.1}
\end{equation}
and
\begin{eqnarray}
    \sup_{0\leq t\leq T}\|(J_\varepsilon,v_\varepsilon,\vartheta_\varepsilon)
    \|_{H^2}^2+\int_0^T
    (\|\partial_tJ_\varepsilon\|_{H^2}^2+\|(v_\varepsilon,
    \vartheta_\varepsilon)\|_{H^3}^2
    +\|(\partial_tv_\varepsilon,\partial_t\vartheta_\varepsilon)
    \|_{H^1}^2)dt\leq C,\label{4.2}
\end{eqnarray}
for any $\varepsilon\in(0,1)$.

Thanks to (\ref{4.2}), by the Banach-Alaoglu theorem, and using Cantor's
diagonal arguments, there is a subsequence, still denoted by $\{(J_\varepsilon, v_\varepsilon, \theta_\varepsilon)\}$, and
a triple $(J,v,\vartheta)$, such that
\begin{eqnarray}
  (J_\varepsilon, v_\varepsilon, \vartheta_\varepsilon)\overset{*}{\rightharpoonup}(J,v,\vartheta), &&
  \mbox{in }L^\infty(0,T; H^2),\label{CG1}\\
  \partial_tJ_\varepsilon\overset{*}{\rightharpoonup}\partial_tJ,
  &&\mbox{in }L^2(0,T; H^2),\label{CG2}\\
  (\partial_tv_\varepsilon,\partial_t\vartheta_\varepsilon)\rightharpoonup
  (\partial_tv,\partial_t\vartheta),&&\mbox{in }L^2(0,T;H^1),\label{CG3}\\
  (v_\varepsilon,\vartheta_\varepsilon)\rightharpoonup(v,\vartheta),&&
  \mbox{in }L^2(0,T; H^3),\label{CG4}
\end{eqnarray}
and, moreover, by Aubin-Lions compactness lemma, that
\begin{eqnarray}
  (J_\varepsilon,v_\varepsilon,\vartheta_\varepsilon)\rightarrow
  (J,v,\vartheta),&&\mbox{in }C([0,T];H^1),\label{CG5}\\
  (v_\varepsilon,\vartheta_\varepsilon)\rightarrow
  (v,\vartheta),&&\mbox{in }L^2(0,T;H^2),\label{CG6}
\end{eqnarray}
where $\rightharpoonup, \overset{*}{\rightharpoonup}$, and $\rightarrow$, respectively, denote the weak, weak$-*$, and strong
convergences in the corresponding spaces.
Noticing that $H^1((0,L))\hookrightarrow C([0,L])$, we then have
$$
J_\varepsilon\rightarrow J,\quad\mbox{in }C([0,L]\times[0,T])
$$
and, thus, it follow follows from (\ref{4.1}) that
\begin{equation*}
  \inf_{0\leq t\leq T}\inf_{y\in(0,L)}J(y,t)>0,
\end{equation*}
for any $T\in(0,\infty)$.

Thanks to the convergences (\ref{CG1})--(\ref{CG6}), one can take the
limit $\varepsilon\rightarrow0$ to show that $(J,v,\vartheta)$ is a
strong solution to system (\ref{EQJ})--(\ref{EQtheta}), subject to
(\ref{BC})--(\ref{IC}), satisfying the regularities stated in Theorem
\ref{thm}.

We now prove the uniqueness. Let $(J_1, v_1, \vartheta_1)$ and $(J_2,
v_2, \vartheta_2)$ be two solutions to system
(\ref{EQJ})--(\ref{EQtheta}), subject to (\ref{BC})--(\ref{IC}),
satisfying the regularities stated in Theorem \ref{thm}, with the
same initial data. Denote by $(J, v,\vartheta)$ the difference of
these two solutions, that is,
$$
(J, v,\vartheta)=(J_1, v_1,\vartheta_1)-(J_2, v_2,\vartheta_2).
$$
Then, $(J, v, \vartheta)$ satisfies the following
\begin{eqnarray}
  \partial_tJ&=&\partial_yv,\label{D1}\\
  \varrho_0\partial_tv-\mu\partial_y\left(\frac{\partial_yv}{J_1}\right)
  &=&-\mu\partial_y\left(\frac{\partial_yv_2}{J_1J_2}J\right)+R\partial_y
  \left(\frac{\varrho_0\vartheta_2}{J_1J_2}J-\frac{\varrho_0}{J_1}
  \vartheta
  \right),\label{D2}\\
  c_v\varrho_0\partial_t\vartheta-\kappa\partial_y\left(\frac{\partial_y
  \vartheta}{J_1}\right)&=&-\kappa\partial_y\left(\frac{\partial_y
  \vartheta_2}
  {J_1J_2}J\right)+\left[\mu\left(\frac{\partial_yv_1}{J_1}
  +\frac{\partial_y
  v_2}{J_2}\right)-R\frac{\varrho_0\vartheta_2}{J_2}\right]\partial_yv
  \nonumber\\
  &&-R\frac{\varrho_0\partial_yv_1}{J_1}\vartheta-\frac{\partial_yv_1}
  {J_1J_2}
  (\mu\partial_yv_2-R\varrho_0\vartheta_2)J.\label{D3}
\end{eqnarray}

Multiplying (\ref{D2}) by $v$, integrating the resultant over $(0,L)$,
and integrating by parts, it follows from the Young and
Sobolev embedding inequalities, and the regularities of $(J_i, v_i,
\vartheta_i)$, $i=1,2,$ that
\begin{eqnarray*}
   \frac12\frac{d}{dt}\|\sqrt{\varrho_0}v\|_2^2+\mu\left\|\frac{\partial_y
  v}{\sqrt{J_1}}\right\|_2^2
  &=&\int_0^L\left[
  \mu\frac{\partial_yv_2}{J_1J_2}J+R
  \left(\frac{\varrho_0}{J_1}\vartheta-
  \frac{\varrho_0\vartheta_2}{J_1J_2}J
  \right)
  \right]\partial_yvdy\\
  &\leq&\frac\mu2\left\|\frac{\partial_y
  v}{\sqrt{J_1}}\right\|_2^2+C\int_0^L\left(
  \frac{|\partial_yv_2|^2
  +\varrho_0^2\vartheta_2^2}{J_1J_2^2}J^2
  +\frac{\varrho_0^2\vartheta^2}{J_1}\right)dy\\
  &\leq&\frac\mu2\left\|\frac{\partial_y
  v}{\sqrt{J_1}}\right\|_2^2+C(\|(\partial_yv_2,\vartheta_2)
  \|_\infty^2+1)\|(J,\sqrt{\varrho_0}\vartheta)\|_2^2\\
  &\leq&\frac\mu2\left\|\frac{\partial_y
  v}{\sqrt{J_1}}\right\|_2^2+C (\|J\|_2^2+\|\sqrt{\varrho_0}\vartheta\|_2^2)
\end{eqnarray*}
and, thus,
\begin{equation}
  \label{ESTD1}
  \frac{d}{dt}\|\sqrt{\varrho_0}v\|_2^2+\mu\left\|\frac{\partial_y
  v}{\sqrt{J_1}}\right\|_2^2
  \leq C (\|J\|_2^2+\|\sqrt{\varrho_0}\vartheta\|_2^2).
\end{equation}

Multiplying (\ref{D3}) by $\vartheta$, integrating the resultant over
$(0,L)$, and integrating by parts, it follows from the H\"oler, Young,
and Sobolev embedding inequalities, and the regularities of $(J_i, v_i,
\vartheta_i)$, $i=1,2,$ that
\begin{eqnarray*}
  &&\frac{c_v}{2}\frac{d}{dt}\|\sqrt{\varrho_0}\vartheta\|_2^2
  +\kappa\left\|\frac{\partial_y\vartheta}{\sqrt{J_1}}\right\|_2^2 \\
  &=&\int_0^L\left\{\kappa\frac{\partial_y\vartheta_2}{J_1J_2}J \partial_y\vartheta
  +\left[\mu\left(\frac{\partial_yv_1}{J_1}+\frac{\partial_y
  v_2}{J_2}\right)-R\frac{\varrho_0\vartheta_2}{J_2}\right]\partial_yv
  \vartheta\right\} dy \\
  &&-R\int_0^L\frac{\varrho_0\partial_yv_1}{J_1}\vartheta^2dy
  -\int_0^L\frac{\partial_yv_1}{J_1J_2}
  (\mu\partial_yv_2-R\varrho_0\vartheta_2)J\vartheta dy\\
  &\leq& C\left\|\frac{\partial_y\vartheta}{\sqrt{J_1}}\right\|_2
  \|\partial_y\vartheta_2\|_\infty\|J\|_2+C(\|\partial_yv_1\|_2
  +\|\partial_yv_2\|_2+\|\vartheta_2\|_2)\left\|\frac{\partial_yv}
  {\sqrt{J_1}}\right\|_2\|\vartheta\|_\infty \\
  &&+C\|\partial_yv_1\|_\infty\|\sqrt{\varrho_0}\vartheta\|_2^2+
  C\|\partial_yv_1\|_\infty(\|\partial_yv_2\|_2+\|\vartheta_2\|_2)
  \|J\|_2\|\vartheta\|_\infty \\
  &\leq&\frac\kappa4\left\|\frac{\partial_y\vartheta}
  {\sqrt{J_1}}\right\|_2^2+C(\|J\|_2^2+\|\sqrt{\varrho_0}\vartheta\|_2^2)
  +\left(\|J\|_2+\left\|\frac{\partial_yv}
  {\sqrt{J_1}}\right\|_2\right)\|\vartheta\|_\infty,
\end{eqnarray*}
from which, one obtains
\begin{equation}
  {c_v} \frac{d}{dt}\|\sqrt{\varrho_0}\vartheta\|_2^2
  +\frac{3\kappa}{4}\left\|\frac{\partial_y\vartheta}{\sqrt{J_1}}
  \right\|_2^2 \leq C\|(J,\sqrt{\varrho_0}\vartheta)\|_2^2
  +\left(\|J\|_2+\left\|\frac{\partial_yv}
  {\sqrt{J_1}}\right\|_2\right)\|\vartheta\|_\infty.\label{ESTD2-0}
\end{equation}
Using the same arguments as those for the estimate $\|\vartheta\|_\infty$ in (i) of Proposition \ref{PROPEstRhoWei}, one can show that
\begin{equation}
\label{O}
\|\vartheta\|_\infty\leq C\left(\|\sqrt{\varrho_0}\vartheta\|_2+
\left\|\frac{\partial_y\vartheta}
  {\sqrt{J_1}}\right\|_2\right).
\end{equation}
By the aid of this and using the Young inequality, one gets from (\ref{ESTD2-0}) that
\begin{equation}
  {c_v} \frac{d}{dt}\|\sqrt{\varrho_0}\vartheta\|_2^2
  + \kappa \left\|\frac{\partial_y\vartheta}{\sqrt{J_1}}\right\|_2^2 \leq A\left(\|J\|_2^2+\|\sqrt{\varrho_0}\vartheta\|_2^2
  +\left\|\frac{\partial_yv}
  {\sqrt{J_1}}\right\|_2^2\right),\label{ESTD2}
\end{equation}
where $A$ is a positive constant.

Multiplying (\ref{D1}) by $2J$, and integrating the resultant over $(0,L)$, it follows from the Young inequality that
\begin{equation}
\frac{d}{dt}\|J\|_2^2\leq A\left\|\frac{\partial_yv}
  {\sqrt{J_1}}\right\|_2^2+C\|J\|_2^2.\label{ESTD3}
\end{equation}

Multiplying (\ref{ESTD1}) by $\frac{2A}{\mu}$, summing the resultant with (\ref{ESTD2}) and (\ref{ESTD3}), one obtains
\begin{align*}
  \frac{d}{dt}\left(\frac{2A}{\mu}\|\sqrt{\varrho_0}v\|_2^2
  +c_v\|\sqrt{\varrho_0}\vartheta\|_2^2+\|J\|_2^2\right)
  &+ A\left\|\frac{\partial_yv}
  {\sqrt{J_1}}\right\|_2^2\\
  +\kappa \left\|\frac{\partial_y\vartheta}{\sqrt{J_1}}\right\|_2^2
   \leq& C\|(J\|_2^2+\|\sqrt{\varrho_0}\vartheta\|_2^2),
\end{align*}
from which, by the Gronwall inequality, one gets
$$
\sqrt{\varrho_0}v\equiv\sqrt{\varrho_0}\vartheta\equiv J
\equiv\partial_yv\equiv\partial_y\vartheta\equiv0.
$$
Therefore, recalling (\ref{O}) and its counterpart for $v$, we have
$$
J\equiv v\equiv\vartheta\equiv0.
$$
This proves the uniqueness.
\end{proof}

\section*{Acknowledgments}
This work was supported in part by
the the Hong Kong RGC grant CUHK 14302917, and the Direct Grant for Research 2016/2017 (Project Code: 4053216) from The
Chinese University of Hong Kong.

\end{document}